\newtheorem{theorem}{Theorem}[section]
\newtheorem{lem}[theorem]{Lemma}
\newtheorem{prop}[theorem]{Proposition}
\newcounter{NoMain}
\newtheorem{mainthm}[NoMain]{Theorem}
\theoremstyle{definition}
\newtheorem{definition}[theorem]{Definition}
\theoremstyle{remark}
\newtheorem{remark}[theorem]{Remark}
\numberwithin{equation}{section}
\newcommand{\R}{\mathbb{R}}
\newcommand{\N}{\mathbb{N}}
\newcommand{\C}{\mathbb{C}}
\newcommand{\fonction}[5]{#1: \begin{array}{ccc}
#2 & \rightarrow & #3 \\
 #4 & \longmapsto & #5 \end{array}}
\DeclareMathOperator{\Ima}{Im}
\DeclareMathOperator{\ad}{ad}
\DeclareMathOperator{\Ad}{Ad}
\DeclareMathOperator{\vspan}{span}
\DeclareMathOperator{\Id}{Id}
\DeclareMathOperator{\rank}{rank}
\DeclareMathOperator{\Aut}{Aut}
\DeclareMathOperator{\Stab}{Stab}
\DeclareMathOperator{\diam}{diam}
\begin{document}

\title{Cartan motion groups: regularity of $K$-finite matrix coefficients}

\author{Guillaume Dumas}
\address{Universite Claude Bernard Lyon 1, CNRS, Centrale Lyon, INSA Lyon, Université Jean Monnet, ICJ UMR5208, 69622
Villeurbanne, France.}

\date{\today}

\email{gdumas@math.univ-lyon1.fr}
\subjclass[2020]{Primary 22E46; Secondary 43A85, 43A90}
\thanks{}

\begin{abstract}
If $G$ is a connected semisimple Lie group with finite center and $K$ is
a maximal compact subgroup of G, then the Lie algebra of $G$ admits a Cartan decomposition $\mathfrak{g}=\mathfrak{k}\oplus\mathfrak{p}$. This allows us to define the Cartan motion group $H=\mathfrak{p}\rtimes K$. In this paper, we study the regularity of $K$-finite matrix coefficients of unitary representations of $H$. We prove that the optimal exponent $\kappa(G)$ for which all such coefficients are $\kappa(G)$-Hölder continuous coincides with the optimal regularity of all $K$-finite coefficients of the group $G$ itself. Our approach relies on stationary phase techniques that were previously employed by the author to study regularity in the setting of $(G,K)$. Furthermore, we provide a general framework to reduce the question of regularity from $K$-finite coefficients to $K$-bi-invariant coefficients. 
\end{abstract}
\maketitle

\section{Introduction}
In his proof of his strengthening of property $(T)$ for $SL(3,\R)$, Lafforgue used the fact that $SO(2)$-invariant matrix coefficients of unitary representations of $SO(3)$ are $\frac{1}{2}$-Hölder continuous outside of some singular points (\cite{lafforgue}).

In \cite{dumas2023regularity, dumas2024regularity}, we obtained generalizations of this regularity result for many Lie groups. Given a Lie group $G$, the aim is to find some compact subgroup $K$ and the largest value of $\alpha>0$ such that all $K$-bi-invariant - and even $K$-finite - matrix coefficients of unitary representations of $G$ are of class $C^\alpha$.

As explained in \cite{dumas2023regularity}, a nice context to answer such questions is to assume that $(G,K)$ is a Gelfand pair. In that case, obtaining regularity results for $K$-bi-invariant coefficients reduces to the boundedness of a specific family in Hölder spaces, the family of spherical functions (see Section \ref{sec:gelfand}).

If $M$ is a Riemannian symmetric space, we can associate a natural Gelfand pair $(G,K)$ such that $M=G/K$, which we call a symmetric Gelfand pair (see Section \ref{sec:sym}). If $M$ is simply connected, then $M$ splits as a direct product $M=M_+\times M_-\times M_0$ where $M_+$ is of compact type (positive curvature), $M_-$ is of non-compact type (negative curvature) and $M_0$ is a Euclidean space (zero curvature).

Spaces of non-compact type correspond to quotient spaces $G/K$ where $G$ is a semisimple Lie group with finite center and $K$ a maximal compact subgroup. The regularity of their coefficients was studied in \cite{dumas2024regularity}. Such a space is always simply connected. If $U$ is a compact real form of $G$, then $U/K$ is a symmetric space of compact type, which we call the compact dual of $G/K$. The symmetric pairs $(U,K)$ with $U$ compact semisimple were studied in \cite{dumas2023regularity} and the results were improved in \cite{dumas2024regularity}.

The purpose of this note is to study the flat case. As pointed out in \cite[Lemma 2.9]{dumas2023regularity}, in compact and non-compact type there is essentially one connected symmetric Gelfand pair which represents $M$. This is far from being true in the Euclidean case. Indeed, consider $M=\R^n$. The group of displacement $G(M)$ is $\R^n$ itself, and the natural Gelfand pair is $(\R^n,\{0\})$. Then the spherical functions are just $x\mapsto e^{2i\pi \langle x,y\rangle}$ for $y\in \R^n$, which are not bounded in any Hölder space. On the other hand, the group of all automorphisms (of symmetric spaces) of $M$ is $G=\R^n\rtimes GL_n(\R)$, the group of affine map. For any compact subgroup $K$ of $GL_n(\R)$, we obtain a symmetric Gelfand pair $(\R^n\rtimes K,K)$. If for example $K=SO(n)$, the group $G$ is called the Euclidean motion group and the spherical functions of $G$ can be expressed in terms of Bessel functions (\cite[Chapter XI]{vilenkin1968special}). Thus, the behavior is very different in that case.

We will focus on pairs arising from Cartan motion groups. Let $G$ be a semisimple Lie group with finite center, then there is a Cartan decomposition $\mathfrak{g}=\mathfrak{k}\oplus \mathfrak{p}$ where $\mathfrak{k}$ is the Lie algebra of a maximal compact subgroup $K$. Then $K$ acts on $\mathfrak{p}$, seen as a vector space, by the adjoint representation. Let $H=\mathfrak{p}\rtimes K$. Then $(H,K)$ is a symmetric Gelfand pair and $H/K$ is a Euclidean symmetric space. Note that we can thus obtain three natural Gelfand pairs related to $G$: $(G,K)$ is a pair of non-compact type, $(U,K)$ is a pair of compact type for $U$ compact real form, and $(H,K)$ a flat pair.

Our goal in this paper is to find the optimal value $(r,\delta)\in \N\times [0,1]$ such that any $K$-finite matrix coefficient of $H$ belongs to the Hölder space $C^{(r,\delta)}(H_r)$ (see Section \ref{sec:holder} for the precise definition of these Hölder spaces). Here, $H_r$ is the dense open subset of regular points of $G$ (see Section \ref{sec:kak} for the definition). 

If $\mathfrak{a}$ is a maximal abelian subspace of $\mathfrak{p}$, we can consider the associated root system $\Sigma\subset \mathfrak{a}^*$ and a choice of positive roots $\Sigma^+$ (see Section \ref{sec:cartan} for more details, in particular for the definitions of those objects). 
For $\lambda\in \mathfrak{a}^*$, define \begin{equation*}n(\lambda)=\sum_{\begin{subarray}{c}
    \alpha\in \Sigma^+\\
    \langle \alpha,\lambda\rangle\neq 0
\end{subarray}} m(\alpha)\end{equation*}and set \begin{equation*}
\kappa(G)=\underset{\lambda \in  \mathfrak{a}^*\setminus \{0\}}{\inf} \frac{n(\lambda)}{2}.\end{equation*}

Our main result is the following.
\begin{mainthm}\label{mainthmA}
    Let $G$ be a connected semisimple Lie group with finite center and $H=\mathfrak{p}\rtimes K$ its Cartan motion group. Let $r=\lfloor \kappa(G)\rfloor$ and $\delta=\kappa(G)-r$. Then any $K$-finite matrix coefficient of a unitary representation of $H$ is in $C^{(r,\delta)}(H_r)$. Furthermore, this result is optimal in the sense that for any $\delta'>\delta$, there exists a $K$-finite (even $K$-bi-invariant) coefficient which is not in $C^{(r,\delta')}(H_r)$.
\end{mainthm}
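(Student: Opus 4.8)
The plan is to prove Theorem~\ref{mainthmA} by reducing the study of $K$-finite matrix coefficients of $H=\mathfrak{p}\rtimes K$ to an explicit family of $K$-bi-invariant coefficients, and then analyzing those via stationary phase. First I would recall the structure of the unitary dual of the motion group $H$ via Mackey theory: irreducible unitary representations of $H=\mathfrak{p}\rtimes K$ are induced from characters of the abelian normal subgroup $\mathfrak{p}$ together with representations of the stabilizer in $K$. Concretely, the unitary characters of $\mathfrak{p}$ are parametrized by $\mathfrak{p}^*\cong\mathfrak{p}$, and $K$ acts on this dual; picking an orbit representative $\lambda$ and an irreducible representation $\sigma$ of the stabilizer $K_\lambda$, one obtains an induced representation $\pi_{\lambda,\sigma}=\mathrm{Ind}_{\mathfrak{p}\rtimes K_\lambda}^{H}(\chi_\lambda\otimes\sigma)$. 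The matrix coefficients of such a representation, evaluated on $(v,k)\in H$, take the form of an integral over the $K$-orbit of $\lambda$ of $e^{2i\pi\langle k'\cdot\lambda, v\rangle}$ weighted by matrix coefficients of $\sigma$ and of the $K$-representation; this is the precise point where the oscillatory integral enters.

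The key reduction, promised in the abstract as a ``general framework to reduce the question of regularity from $K$-finite coefficients to $K$-bi-invariant coefficients,'' is to show that the worst Hölder regularity among all $K$-finite coefficients is already attained (and bounded) by the $K$-bi-invariant ones. For the \emph{upper bound} (regularity holds), I would observe that $K$-finiteness means the coefficient lives in a finite-dimensional $K\times K$-submodule, so after averaging against matrix entries of finite-dimensional $K$-representations one can dominate its Hölder seminorm by that of an associated bi-invariant coefficient, namely a spherical-type function obtained by integrating the character $e^{2i\pi\langle k\cdot\lambda,v\rangle}$ over $K$. Up to the $K$-factor (which is smooth and harmless since $K$ is compact), the regularity of a coefficient of $H$ on the regular set $H_r$ is governed by the radial/vector part, i.e.\ by functions of the form
\begin{equation*}
\phi_\lambda(v)=\int_K e^{2i\pi\langle \mathrm{Ad}(k)^*\lambda,\, v\rangle}\,dk,
\end{equation*}
which are exactly the spherical functions of the Gelfand pair $(H,K)$ (Bochner's formula). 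Since $(H,K)$ is a Gelfand pair, the results recalled in Section~\ref{sec:gelfand} reduce the regularity of all $K$-bi-invariant coefficients to a uniform bound, in the Hölder space $C^{(r,\delta)}(H_r)$, of the family $\{\phi_\lambda\}_{\lambda\in\mathfrak{a}^*}$; by $K$-invariance it suffices to control $\phi_\lambda$ restricted to $\mathfrak{a}$, i.e.\ to estimate the oscillatory integral over the compact orbit $K\cdot\lambda$ as the frequency grows.

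The analytic heart is the stationary phase estimate for $\phi_\lambda$ on $\mathfrak{a}$, and here I would transport the method already developed by the author in the non-compact case~\cite{dumas2024regularity}. Writing $v=H\in\mathfrak{a}$, the phase $k\mapsto\langle \mathrm{Ad}(k)^*\lambda, H\rangle$ on $K$ has critical manifolds governed by the Weyl group and by the roots: the Hessian transverse to the critical set degenerates precisely according to which roots $\alpha$ pair nontrivially with $\lambda$, and the multiplicity $m(\alpha)$ counts the transverse dimensions contributing to each critical point. A stationary phase expansion then yields decay of $\phi_\lambda$ of order $|\lambda|^{-n(\lambda)/2}$ where $n(\lambda)=\sum_{\langle\alpha,\lambda\rangle\neq0}m(\alpha)$, and a standard interpolation between decay of the Fourier-type transform and Hölder regularity converts this decay rate into membership in $C^{(r,\delta)}(H_r)$ with $r+\delta=\inf_\lambda n(\lambda)/2=\kappa(G)$. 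The main obstacle I anticipate is making the stationary phase uniform and controlling the behavior near the singular set: the critical points degenerate as $H$ approaches a wall (a non-regular point), so the estimates must be localized to $H_r$ and the degeneration of the Hessian tracked carefully as a function of $H$; this is exactly why the statement is restricted to the regular set $H_r$, and why the optimal exponent matches $\kappa(G)$ rather than something larger.

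For \emph{optimality}, I would exhibit a specific direction $\lambda_0\in\mathfrak{a}^*\setminus\{0\}$ achieving the infimum $n(\lambda_0)/2=\kappa(G)$ and show that the corresponding spherical function $\phi_{\lambda}$, as $\lambda\to\infty$ along this ray, has Hölder seminorm in $C^{(r,\delta')}$ blowing up for every $\delta'>\delta$. Concretely, the decay rate $|\lambda|^{-\kappa(G)}$ is sharp for this direction by the leading term of the stationary phase asymptotics, so the inverse-Fourier/Gelfand-transform argument produces a $K$-bi-invariant coefficient of $H$ (a suitable superposition or limit of the $\phi_\lambda$) whose regularity cannot exceed $(r,\delta)$; this gives the claimed counterexample and closes the proof.
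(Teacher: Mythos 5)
Your overall strategy matches the paper's: describe the positive-definite spherical functions of $(H,K)$ as oscillatory integrals $\int_K e^{i\langle X,\Ad(k)H_\lambda\rangle}dk$, reduce $K$-finite coefficients to $K$-bi-invariant ones and hence to uniform bounds on this family, run stationary phase using the critical-set and Hessian analysis of the phase from \cite{DKV}, and get optimality from sharpness of the leading term. However, two steps that you treat as routine are precisely where the paper has to work, and as written both are genuine gaps.

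First, the reduction from $K$-finite to $K$-bi-invariant coefficients is not a matter of ``averaging against matrix entries of finite-dimensional $K$-representations'' to \emph{dominate} a Hölder seminorm. Averaging a $K$-finite coefficient over $K\times K$ produces a bi-invariant object $\tilde f(g)=\int_U\psi(u.g)\rho(u)\,du$-type expression, but this operation loses information (at each $g$ it projects onto the subspace of $U_g$-fixed vectors), so one cannot directly bound the original coefficient by the averaged one; one has to \emph{recover} $f\circ\pi$ from $\tilde f$ by multiplying by a locally defined smooth invertible operator-valued function. Making that inversion smooth is exactly where the paper needs a well-behaved $KAK$ decomposition of $(H_r,K)$: existence and uniqueness of $a(g)\in\mathfrak a^+$ with constant stabilizer $\Delta(M)$, smoothness of $g\mapsto a(g)$, and local smooth choices of $k_1(g),k_2(g)$ (Propositions \ref{prop:kakbase} and \ref{prop:KAKversionlisse}, proved via a submersion argument). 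Your proposal never establishes, or even flags the need for, any regularity of the $K$-orbit decomposition of $\mathfrak p$, and without it the reduction does not go through; this is also why the statement is confined to $H_r$, not (as you suggest) only because the stationary-phase Hessian degenerates at the walls.

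Second, the optimality argument cannot be concluded from ``the decay rate $t^{-\kappa(G)}$ is sharp by the leading term of the stationary phase asymptotics.'' The leading term of $D^r\varphi_{t\lambda}(a)$ is a finite sum $t^{-\delta}\sum_{w\in W/W_\lambda}e^{it(w\lambda)(a)}c_{w,a}(\cdot)$ of oscillating exponentials with smoothly varying amplitudes, and for fixed $x,y$ and fixed $t$ these terms can cancel. To contradict boundedness in $C^{(r,\delta')}$ one needs a quantitative lower bound for $\vert S_t(x)-S_t(y)\vert$ \emph{on average over a window of $t$'s} whose length and starting point are calibrated to $\Vert x-y\Vert$ (this is Lemma \ref{lem:expo} in the paper, and the choices \eqref{eq:choicem}--\eqref{eq:choiceN} of $m$ and $N$ are where $\delta'>\delta$ is finally used). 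Your ``inverse-Fourier/Gelfand-transform argument'' gestures at producing a single bad coefficient, but the actual mechanism is unboundedness of the family of spherical functions in $C^{(r,\delta')}$ combined with Lemma \ref{lem:lienspheriquekbiinv}; without the averaged lower bound handling possible cancellation among the Weyl-group terms, the sharpness of the decay rate for a single $\phi_{t\lambda}$ does not yield the conclusion.
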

Comparing with \cite[Thm. A and B]{dumas2024regularity}, the regularity obtained for $H$ on $H_r$ is exactly the regularity that was previously obtained for the semisimple group $G$, and the regularity that was obtained for its compact real form $U$ (only on a small open subset). Thus, all three Gelfand pairs arising from $G$ exhibit a similar behavior in terms of the regularity of their $K$-finite coefficients.\smallskip

We first explain how to reduce this question to understanding spherical functions of a Gelfand pair. In Section \ref{sec:gelfand} we show that spherical functions dictate the behavior of all $K$-bi-invariants. In Section \ref{sec:kfinite}, we prove that this is enough to understand all $K$-finite coefficients. Such a result was already proved in\cite[Section 5]{dumas2023regularity} and \cite[Section 3.2]{dumas2024regularity} in special cases (namely compact and non-compact type). Here, we give a general proof under some assumptions on the existence of a $KAK$ decomposition. Section \ref{sec:backgroundcartan} is devoted to preliminaries on Cartan motion groups and their harmonic analysis. We also prove some results on $KAK$ decomposition in this context (Section \ref{sec:kak}). Finally, we prove Theorem \ref{mainthmA} in Section \ref{sec:mainresult}. To do so, we proceed in a similar fashion to \cite{dumas2024regularity}. Indeed, spherical functions are expressed as an oscillatory integral, thus we may understand their asymptotics through the stationary phase approximation. The phase functions appearing in these integrals were studied in \cite{DKV}. 

\subsection*{Acknowledgments} I am grateful to Alexandre Afgoustidis and Jean-Louis Clerc who pointed out the question of Cartan motion groups to me. I would like to thank my Ph.D. advisor Mikael de la Salle for his involvement.

\section{Reduction to harmonic analysis on a Gelfand pair}
\subsection{Hölder spaces}\label{sec:holder}
\begin{definition}\label{def:holdermultivar}Let $(X,d)$ be a metric space and $U$ open subset of $X$, $(E,\Vert.\Vert)$ a normed vector space, $\alpha\in ]0,1]$. A function $f:U\to E$ is $\alpha$-Hölder if for any compact subset $K$ of $U$, there is $C_K>0$ such that $\forall x,y\in K$, $\Vert f(x)-f(y)\Vert \leq C_Kd(x,y)^\alpha$.

If $X$ is also a normed vector space and $r\in \N$, we say that the map $f$ belongs to $C^{(r,\alpha)}(U,E)$ if $f\in C^r(U,E)$ and the $r$-th differential $D^rf$ is $\alpha$-Hölder as a map from $U$ to the vector space of multilinear $r$-forms. We extend to $\alpha=0$ by $C^{(r,0)}(U,E)=C^r(U,E)$.

For $K$ a compact subset of $U$ and $f\in C^{(r,\alpha)}(U,E)$, define $$\Vert f\Vert_{C^{(r,\alpha)}(K,E)}=\max \left\{\underset{k\leq r}{\max}\, \underset{x\in K}{\sup}\Vert D^kf(x)\Vert,\underset{x,y\in K,x\neq y}{\sup}\frac{\Vert D^rf(x)-D^rf(y)\Vert}{d(x,y)^\alpha}\right\}.$$
The family of semi-norms $\Vert.\Vert_{C^{(r,\alpha)}(K,E)}$ for $K$ a compact subset of $U$ makes the space $C^{(r,\alpha)}(U,E)$ into a Fréchet space.

Finally if $(X,d)$ is a Riemannian manifold, we say that $f\in C^{(r,\alpha)}(U,E)$ if for any chart $(\varphi,V)$ of $U$, $f\circ \varphi^{-1}\in C^{(r,\alpha)}(\varphi(V),E)$.
\end{definition}
\begin{remark}\label{rem:holder}
    If $U$ is locally compact, a function $f:U\to E$ is $\alpha$-Hölder if and only if for any $x\in U$, there exists a neighborhood $U_x$ of $x$ and a constant $C_x>0$ such that for any $y,z\in U_x$, $\Vert f(y)-f(z)\Vert\leq C_xd(y,z)^\alpha$.
\end{remark}
We will denote $C^{(r,\alpha)}(U,\C)$ by $C^{(r,\alpha)}(U)$.

The following lemma will be useful throughout the article and can be found in \cite[Lemma 2.1]{dumas2023regularity}
\begin{lem}
\label{lem:precomposition}Let $(X,d)$ and $(Y,d')$ be two Riemannian manifolds and $U,V$ open subsets of $X,Y$ respectively. Let $\alpha>0$ and $r\in \N$. Let $\varphi:U\to V$ be a function of class $C^{\infty}$. Then $\varphi_*:f\mapsto f\circ \varphi$ maps $C^{(r,\alpha)}(V)$ to $C^{(r,\alpha)}(U)$ and is continuous.
\end{lem}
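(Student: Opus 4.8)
The statement is local and chart-invariant, so the plan is first to reduce to the Euclidean setting. To verify $f\circ\varphi\in C^{(r,\alpha)}(U)$ we must check, for every coordinate chart $\chi$ of $U$, that $(f\circ\varphi)\circ\chi^{-1}$ is of class $C^{(r,\alpha)}$ on the corresponding open subset of Euclidean space. Choosing a chart $\eta$ of $V$ and writing $(f\circ\varphi)\circ\chi^{-1}=(f\circ\eta^{-1})\circ(\eta\circ\varphi\circ\chi^{-1})$, the inner map $\eta\circ\varphi\circ\chi^{-1}$ is smooth between open subsets of Euclidean spaces while $f\circ\eta^{-1}\in C^{(r,\alpha)}$ by hypothesis. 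Hence it suffices to prove the claim when $X=\R^m$, $Y=\R^n$ and $U,V$ are open with $\varphi:U\to V$ smooth. By Remark \ref{rem:holder}, Hölder regularity is a purely local condition, so I may further restrict to small neighborhoods on which all relevant derivatives of $\varphi$ are bounded and Lipschitz.

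The core computation is the Faà di Bruno formula for $D^r(f\circ\varphi)$. At a point $x\in U$ it expands as a finite sum of terms of the form
\[
\bigl(D^k f\bigr)(\varphi(x)) \cdot P_k(x),
\]
where $1\le k\le r$ and $P_k$ is a universal polynomial expression in the derivatives $D\varphi,\dots,D^{r-k+1}\varphi$, hence a smooth (in particular locally Lipschitz and locally bounded) tensor-valued function. In particular $f\circ\varphi\in C^r(U)$, so it only remains to show that $D^r(f\circ\varphi)$ is $\alpha$-Hölder.

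I would treat each summand using two elementary facts: (i) if $g$ is $\alpha$-Hölder and $\psi$ is locally Lipschitz then $g\circ\psi$ is $\alpha$-Hölder, since on a compact set $\Vert g(\psi(x))-g(\psi(y))\Vert\le C\Vert\psi(x)-\psi(y)\Vert^{\alpha}\le CL^{\alpha}\Vert x-y\Vert^{\alpha}$; and (ii) sums and products of locally bounded $\alpha$-Hölder functions are again $\alpha$-Hölder. For the top term $k=r$, the factor $(D^r f)\circ\varphi$ is $\alpha$-Hölder by (i), because $D^r f$ is $\alpha$-Hölder by hypothesis and $\varphi$ is smooth hence locally Lipschitz; multiplying by the smooth factor $P_r=(D\varphi)^{\otimes r}$ preserves this by (ii). For the lower terms $k\le r-1$, the factor $D^k f$ is of class $C^{r-k}$ with $r-k\ge 1$, hence locally $C^1$ and therefore locally Lipschitz; composing with $\varphi$ and multiplying by the smooth $P_k$ again yields an $\alpha$-Hölder function, as Lipschitz functions are $\alpha$-Hölder for every $\alpha\in(0,1]$ on compact sets. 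Summing, $D^r(f\circ\varphi)$ is $\alpha$-Hölder.

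Finally, $\varphi_*$ is linear, so continuity between the Fréchet spaces reduces to dominating each target seminorm by a source seminorm. Tracking the constants above, for any compact $K\subset U$ one obtains $\Vert f\circ\varphi\Vert_{C^{(r,\alpha)}(K)}\le C_{K,\varphi}\,\Vert f\Vert_{C^{(r,\alpha)}(\varphi(K))}$, where $\varphi(K)$ is compact in $V$ and $C_{K,\varphi}$ depends only on the sup-norms and Lipschitz constants of $\varphi$ and its derivatives on $K$; this is precisely the required continuity estimate. The main obstacle I expect is purely organizational: arranging the Faà di Bruno expansion so that the single Hölder hypothesis on $D^r f$ is invoked only for the top-order term, while checking that every lower-order factor is automatically Lipschitz and locally bounded and that the constants assemble into a clean seminorm bound.
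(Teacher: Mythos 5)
Your argument is correct and complete: the reduction to charts, the Faà di Bruno expansion with the Hölder hypothesis used only on the top-order term $(D^rf)\circ\varphi$ and local Lipschitzness of $D^kf$ for $k<r$, and the resulting seminorm bound $\Vert f\circ\varphi\Vert_{C^{(r,\alpha)}(K)}\leq C_{K,\varphi}\Vert f\Vert_{C^{(r,\alpha)}(\varphi(K))}$ are exactly what is needed. The paper itself gives no proof of this lemma (it only cites \cite[Lemma 2.1]{dumas2023regularity}), and your write-up is the standard argument one would expect to find there.
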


\subsection{Gelfand pairs}\label{sec:gelfand}
\begin{definition}Let $G$ be a locally compact topological group with a left Haar measure $dg$ and $K$ a compact subgroup with normalized Haar measure $dk$. The pair $(G,K)$ is a Gelfand pair if the algebra of continuous $K$-bi-invariant functions on $G$ with compact support is commutative for the convolution.

A spherical function of $(G,K)$ is a continuous $K$-bi-invariant non-zero function on $G$ such that for all $x,y\in G$, $$\int_K \varphi(xky)\,dk=\varphi(x)\varphi(y).$$
\end{definition}

A standard result (see \cite[Coro. 6.3.3]{Dijk+2009}) gives a link between spherical functions of $(G,K)$ and unitary representations of $G$.
\begin{prop}
If $(G,K)$ is a Gelfand pair, then for any irreducible unitary representation $\pi$ of $G$ on a Hilbert space $\mathcal{H}$, the subspace $\mathcal{H}^K$ of $K$-invariant vectors is of dimension at most $1$.

The positive-definite spherical functions of $G$ are exactly the matrix coefficients $g\mapsto \langle\pi(g)v,v\rangle$ with $\pi$ an irreducible unitary representation of $G$ and $v$ a $K$-invariant unit vector.

If $G$ is compact, any spherical function is positive-definite.
\end{prop}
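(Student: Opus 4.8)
The plan is to realise all three assertions through the orthogonal projection $P=\int_K\pi(k)\,dk$ onto $\mathcal{H}^K$ and the compressed convolution operators $P\pi(f)P$. The first step I would record is a Fubini computation: for $f\in C_c(G)$ one has $P\pi(f)P=\pi(f^{\natural})$, where $f^{\natural}(g)=\int_K\int_K f(k_1 g k_2)\,dk_1\,dk_2$ is the $K$-bi-invariantisation of $f$ (using invariance of the Haar measure). In particular every compression $P\pi(f)P$ lies in the family $\{\pi(h):h\text{ is }K\text{-bi-invariant}\}$.

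For the dimension bound I would combine the Gelfand hypothesis with the bicommutant theorem. As $\pi$ is irreducible, $\{\pi(f):f\in C_c(G)\}$ generates $B(\mathcal{H})$ as a von Neumann algebra, and since $P\in\pi(K)''\subseteq\pi(G)''$ the compressions $\{P\pi(f)P\}$ generate $PB(\mathcal{H})P=B(\mathcal{H}^K)$. By the first step these compressions all lie among the operators $\pi(h)$ with $h$ $K$-bi-invariant; these commute pairwise because $C_c(K\backslash G/K)$ is commutative, and are closed under adjoints since $h^{*}(g)=\overline{h(g^{-1})}$ stays $K$-bi-invariant. Thus $B(\mathcal{H}^K)$ is the weak closure of a commutative $*$-algebra, hence commutative, which forces $\dim\mathcal{H}^K\le 1$.

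For the characterisation, the reverse inclusion is direct: if $v\in\mathcal{H}^K$ is a unit vector and $\varphi(g)=\langle\pi(g)v,v\rangle$, then $\varphi$ is continuous, positive-definite, $K$-bi-invariant and $\varphi(e)=1$; the functional equation follows from $\int_K\varphi(xky)\,dk=\langle\pi(x)P\pi(y)v,v\rangle$ together with $P\pi(y)v=\varphi(y)v$, which holds because $\dim\mathcal{H}^K\le1$ makes $P=\langle\,\cdot\,,v\rangle v$. For the forward inclusion I would run the GNS construction on a positive-definite spherical $\varphi$ normalised by $\varphi(e)=1$, producing a cyclic triple $(\pi,\mathcal{H},v)$ with $\varphi(g)=\langle\pi(g)v,v\rangle$; the $K$-invariance of $v$ is immediate from $\|\pi(k)v-v\|^2=2\varphi(e)-\varphi(k)-\overline{\varphi(k)}=0$. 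The crux, which I expect to be the main obstacle, is irreducibility, and it is the only place where the spherical functional equation (rather than mere $K$-bi-invariance) is genuinely used: feeding the functional equation back into the model gives, for every $w\in\mathcal{H}^K$, the identity $\langle\pi(x)w,v\rangle=\varphi(x)\langle w,v\rangle$ for all $x$ (the vectors $P\pi(y)v$ being dense in $\mathcal{H}^K$ by cyclicity). Applied to $w\perp v$ this yields $\mathcal{H}^K=\C v$; then for any $\pi$-invariant splitting $\mathcal{H}=\mathcal{H}_1\oplus\mathcal{H}_2$ the components $v_1,v_2$ of $v$ lie in $\mathcal{H}^K$, hence are scalar multiples of $v$, and cyclicity forces one summand to vanish.

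Finally, for compact $G$ I would invoke the Peter--Weyl decomposition of $L^2(G)$ into isotypic components. A continuous $K$-bi-invariant spherical function lies in $L^2(K\backslash G/K)$, and its functional equation makes it a joint eigenvector for right convolution by $C(K\backslash G/K)$; matching it against the Peter--Weyl components, each contributing only a one-dimensional space of $K$-bi-invariants by the first assertion, identifies $\varphi$ with the coefficient $\langle\pi(\cdot)v,v\rangle$ of a single irreducible $\pi$ and its unit $K$-fixed vector, which is positive-definite.
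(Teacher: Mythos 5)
The paper offers no proof of this proposition: it is quoted as a standard result with a pointer to \cite[Coro.\ 6.3.3]{Dijk+2009}, so there is nothing in the text to compare your argument against line by line. Your proof is correct and is essentially the classical one. Two steps are asserted more quickly than they deserve. First, the claim that the compressions $P\pi(f)P$ generate $B(\mathcal{H}^K)$ does not follow formally from $P\in\pi(G)''$ alone: you need that $\pi(C_c(G))$ is strongly dense in $B(\mathcal{H})$ (von Neumann density, using nondegeneracy of $\pi(C_c(G))$) together with the Kaplansky density theorem to conclude that $P\pi(C_c(G))P$ is strongly dense in the corner $PB(\mathcal{H})P$. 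Alternatively, and more elementarily, you can avoid von Neumann algebras altogether: every nonzero $v\in\mathcal{H}^K$ is cyclic for the commutative $*$-algebra $P\pi(C_c(G))P$ acting on $\mathcal{H}^K$ (since $\pi(C_c(G))v$ is dense in $\mathcal{H}$ and $P$ is continuous), so this algebra acts topologically irreducibly, and Schur's lemma applied to a commutative irreducibly acting $*$-algebra gives $\dim\mathcal{H}^K\le 1$ directly. Second, the adjoint formula $h^*(g)=\overline{h(g^{-1})}$ presupposes unimodularity of $G$; Gelfand pairs are indeed unimodular, but this should be recorded (the modular function is at least trivial on $K$, which is all your Fubini step needs). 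The remaining pieces --- the rank-one identity $P=\langle\cdot,v\rangle v$ driving the functional equation, the GNS construction with the identity $\langle\pi(x)w,v\rangle=\varphi(x)\langle w,v\rangle$ forcing $\mathcal{H}^K=\C v$ and hence irreducibility, and the Peter--Weyl matching in the compact case (where the key input is that each isotypic component contributes at most a line of $K$-bi-invariant functions, so the functional equation kills all but one coefficient and normalizes it to $1$) --- are sound, though the compact case is only a sketch and would need the convergence of the $L^2$-expansion against the functional equation spelled out.
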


More details on Gelfand pairs can be found in \cite[Ch. 5,6,7]{Dijk+2009}.\smallskip

Given a Gelfand pair $(G,K)$, it is natural to study spherical functions in order to get results on $K$-bi-invariant matrix coefficients of unitary representations. Indeed, any matrix coefficient of a unitary representation decomposes into an integral of spherical functions - an infinite sum if $G$ is compact. Then studying boundedness of positive-definite spherical functions in some Hölder spaces is enough to obtain regularity for all $K$-bi-invariant matrix coefficients of unitary representations. More precisely, the optimal regularity of such coefficients is exactly the optimal uniform regularity of spherical functions. The proof of the following two lemmas can be found in \cite[Section 2.2]{dumas2023regularity}.
\begin{lem}\label{lem:decinteg}
Let $(G,K)$ be a Gelfand pair with $G$ second countable. Let $\varphi$ be a $K$-bi-invariant matrix coefficient of a unitary representation $\pi$ on an Hilbert space $\mathcal{H}$. Then, there exists a standard Borel space $X$ and a $\sigma$-finite measure $\mu$ on $X$ such that $$\varphi=\int_X c_x\varphi_x d\mu(x)$$where $\varphi_x$ is a positive-definite spherical function of $(G,K)$ for any $x\in X$ and $c\in L^1(X,\mu)$.
\end{lem}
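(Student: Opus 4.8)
The plan is to reduce to $K$-invariant vectors and then to diagonalize the commutative convolution algebra attached to the Gelfand pair, reading off the spherical functions from its spectrum. Write $\varphi(g)=\langle\pi(g)v,w\rangle$ and let $P_K=\int_K\pi(k)\,dk$, the orthogonal projection of $\mathcal{H}$ onto the subspace $\mathcal{H}^K$ of $K$-invariant vectors. Averaging the identity $\varphi(g)=\int_K\int_K\varphi(k_1 g k_2)\,dk_1\,dk_2$ and using unitarity gives $\varphi(g)=\langle\pi(g)P_Kv,P_Kw\rangle$, so I may replace $v,w$ by $P_Kv,P_Kw$ and assume $v,w\in\mathcal{H}^K$ throughout.

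Next I would exploit the Gelfand condition. For a continuous compactly supported $K$-bi-invariant function $f$, the operator $\pi(f)=\int_G f(g)\pi(g)\,dg$ preserves $\mathcal{H}^K$, satisfies $\pi(f)^*=\pi(f^*)$ with $f^*(g)=\overline{f(g^{-1})}$, and $\pi(f_1)\pi(f_2)=\pi(f_1\ast f_2)$; since $(G,K)$ is a Gelfand pair the convolution is commutative, so these operators generate an abelian von Neumann algebra $\mathcal{A}$ on $\mathcal{H}^K$. Decomposing the cyclic representation of $G$ generated by $v$ and $w$ as a direct integral $\mathcal{H}\cong\int_X^\oplus\mathcal{H}_x\,d\mu(x)$ adapted to $\mathcal{A}$, the Gelfand condition forces $\dim\mathcal{H}_x^K\le 1$ in each fibre, so almost every fibre carries a distinguished $K$-invariant unit vector $e_x$ whose diagonal coefficient $\varphi_x(g)=\langle\pi_x(g)e_x,e_x\rangle$ is a positive-definite spherical function. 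Writing $v=(a_xe_x)_x$ and $w=(b_xe_x)_x$ in this decomposition and disintegrating fibrewise yields $\varphi(g)=\int_X c_x\varphi_x(g)\,d\mu(x)$ with $c_x=a_x\overline{b_x}$. By Cauchy--Schwarz $\int_X|c_x|\,d\mu(x)\le\|v\|\,\|w\|<\infty$, so $c\in L^1(X,\mu)$, and $\mu$ may be taken finite, in particular $\sigma$-finite.

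Finally I would attend to the measurability and Borel structure. As $G$ is second countable and locally compact it is Polish, the space $C(G)$ with the topology of uniform convergence on compact sets is separable and completely metrizable, and the set of positive-definite spherical functions is a Borel subset of it; this serves as the standard Borel space $X$, with $x\mapsto\varphi_x$ Borel. I expect the main obstacle to be precisely this spectral/disintegration step: constructing the measurable field of fibres $x\mapsto(\pi_x,e_x)$ together with the correct one-dimensional $K$-fixed spaces, and matching the resulting scalar spectral measure of $\mathcal{A}$ with an honest integral of spherical functions that holds pointwise in $g$. This is where second-countability and the one-dimensionality of $K$-invariants coming from the Gelfand property are essential; the reduction to $\mathcal{H}^K$ in the first step and the final $L^1$ and $\sigma$-finiteness bookkeeping are routine. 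A complete account can be given along the lines of the references cited above.
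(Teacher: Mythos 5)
The paper does not actually prove this lemma; it is imported from \cite[Section 2.2]{dumas2023regularity}, and your outline follows the same standard circle of ideas (reduction to $\mathcal{H}^K$ via $P_K$, commutativity of the algebra $\pi(C_c(K\backslash G/K))$, disintegration, one-dimensionality of the $K$-fixed subspace of an irreducible, Cauchy--Schwarz for the $L^1$ bound). The reduction step, the $L^1$ and $\sigma$-finiteness bookkeeping, and the standard Borel structure on the set of positive-definite spherical functions are all fine as you state them.

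The one genuine gap is the step you flag yourself, and it is worth being precise about why your phrasing does not yet close it. A direct integral of $\mathcal{H}$ ``adapted to $\mathcal{A}$'' only makes sense for an abelian algebra acting on the space being decomposed, and $\mathcal{A}$ as you define it lives on $\mathcal{H}^K$; more importantly, a $G$-equivariant disintegration whose fibres are not irreducible will produce diagonal coefficients $\langle\pi_x(g)e_x,e_x\rangle$ of $K$-fixed unit vectors that are positive-definite and $K$-bi-invariant but need not satisfy the functional equation $\int_K\varphi(xky)\,dk=\varphi(x)\varphi(y)$, i.e.\ need not be spherical. The missing bridge is multiplicity-freeness: setting $\mathcal{H}'=\overline{\vspan}\,\pi(G)\mathcal{H}^K$, one shows that $\pi(G)'\cap B(\mathcal{H}')$ is abelian (this is where the Gelfand condition is really used, via the injective restriction map $T\mapsto T|_{\mathcal{H}^K}$ and the cyclicity of $\mathcal{H}^K$), so that $\mathcal{H}'$ decomposes, using second countability, as a direct integral of \emph{irreducible} representations over a standard Borel space; only then do the fibre coefficients become spherical and does the induced decomposition of $\mathcal{H}^K$ diagonalize $\mathcal{A}$ as you want. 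A shorter repair avoids direct integrals altogether: after reducing to $v,w\in\mathcal{H}^K$, polarize to write $\varphi$ as a linear combination of the four positive-definite $K$-bi-invariant functions $g\mapsto\langle\pi(g)(v+i^kw),v+i^kw\rangle$ and apply the Bochner--Godement theorem for Gelfand pairs to each, taking $X$ to be four disjoint copies of the set of positive-definite spherical functions and $\mu$ the sum of the four finite representing measures. Either completion yields the lemma; as written, your argument asserts rather than establishes the existence of the measurable field of irreducibles.
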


\begin{lem}\label{lem:lienspheriquekbiinv}Let $(G,K)$ be a Gelfand pair with $G$ a Lie group endowed with a Riemannian metric $d$ and $U$ any open subset of $G$. Let $(\varphi_\lambda)_{\lambda\in \Lambda}$ be the the family of positive-definite spherical functions of $(G,K)$. Then $(\varphi_\lambda)_{\lambda\in \Lambda}$ is bounded in $C^{(r,\delta)}(U)$ if and only if any $K$-bi-invariant matrix coefficient of a unitary representation of $G$ is in $C^{(r,\delta)}(U)$.
\end{lem}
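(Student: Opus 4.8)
The plan is to prove the two implications separately: the forward direction ($\Rightarrow$) is a direct integral estimate, while the converse ($\Leftarrow$) is the substantial part and will rely on the closed graph theorem. For ($\Rightarrow$), assume $(\varphi_\lambda)_{\lambda\in\Lambda}$ is bounded in $C^{(r,\delta)}(U)$ and let $\varphi$ be an arbitrary $K$-bi-invariant matrix coefficient. By Lemma \ref{lem:decinteg} I may write $\varphi=\int_X c_x\varphi_x\,d\mu(x)$ with $c\in L^1(X,\mu)$ and each $\varphi_x$ a positive-definite spherical function, hence a member of the family. Fixing a compact $K_0\subset U$ and working in charts, I set $M=\sup_x\Vert\varphi_x\Vert_{C^{(r,\delta)}(K_0)}<\infty$. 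The uniform bound $\Vert D^k\varphi_x\Vert\le M$ on $K_0$ for $k\le r$ yields the integrable dominating function $M|c_x|$, so dominated convergence (using the measurable-field structure provided by Lemma \ref{lem:decinteg}) allows differentiation under the integral, giving $D^k\varphi=\int_X c_x D^k\varphi_x\,d\mu(x)$. The triangle inequality then controls every seminorm: $\sup_{K_0}\Vert D^k\varphi\Vert\le M\Vert c\Vert_{L^1}$, and for $g,g'\in K_0$, $\Vert D^r\varphi(g)-D^r\varphi(g')\Vert\le\int_X|c_x|\,\Vert D^r\varphi_x(g)-D^r\varphi_x(g')\Vert\,d\mu(x)\le M\Vert c\Vert_{L^1}d(g,g')^\delta$. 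As $K_0$ is arbitrary, $\varphi\in C^{(r,\delta)}(U)$.

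For the converse, suppose every $K$-bi-invariant matrix coefficient lies in $C^{(r,\delta)}(U)$ but, for contradiction, that the family is unbounded: there exist a compact $K_0\subset U$ and a sequence $(\varphi_n)$ of positive-definite spherical functions with $\Vert\varphi_n\Vert_{C^{(r,\delta)}(K_0)}\to\infty$. Recall that each satisfies $\varphi_n(e)=1$ and $|\varphi_n|\le 1$. I would introduce the linear map $T:\ell^1(\N)\to C^{(r,\delta)}(U)$, $Tc=\sum_n c_n\varphi_n$. Splitting $c$ into its four nonnegative real and imaginary parts $c^{(j)}$, each series $\sum_n c_n^{(j)}\varphi_n$ converges uniformly (since $\sum_n c_n^{(j)}<\infty$ and $\Vert\varphi_n\Vert_\infty=1$) to a $K$-bi-invariant continuous positive-definite function; by the GNS construction every such function is a matrix coefficient of a unitary representation, so the hypothesis places it in $C^{(r,\delta)}(U)$. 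Hence $Tc\in C^{(r,\delta)}(U)$ and $T$ is well defined.

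It remains to check that $T$ has closed graph and conclude. If $c^{(m)}\to c$ in $\ell^1$ and $Tc^{(m)}\to f$ in $C^{(r,\delta)}(U)$, then convergence in this Fréchet space forces pointwise convergence, while $\ell^1$-convergence together with $|\varphi_n|\le 1$ gives $Tc^{(m)}(g)\to Tc(g)$ for every $g$; thus $f=Tc$. Since $\ell^1(\N)$ is Banach and $C^{(r,\delta)}(U)$ is Fréchet (Definition \ref{def:holdermultivar}), the closed graph theorem yields continuity of $T$. Evaluating at the standard basis vectors, with $\Vert e_n\Vert_{\ell^1}=1$ and $Te_n=\varphi_n$, continuity forces $\sup_n\Vert\varphi_n\Vert_{C^{(r,\delta)}(K_0)}<\infty$, contradicting our assumption and completing the proof.

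The delicate step is precisely this converse. A naive attempt to exhibit a single bad coefficient as a positive series $\sum_n c_n\varphi_n$ only produces the upper bound $\sum_n c_n\Vert\varphi_n\Vert_{C^{(r,\delta)}(K_0)}$ for its norm, and possible cancellations among the top-order derivatives of the $\varphi_n$ obstruct any matching lower bound. The closed graph theorem bypasses this difficulty entirely, deducing uniform boundedness of the whole family from the mere well-definedness of $T$; the only inputs beyond soft functional analysis are that positive combinations of positive-definite spherical functions are matrix coefficients and that they are uniformly bounded by $1$.
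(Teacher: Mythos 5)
Your proof is correct and follows the route the paper intends: the paper itself defers the proof to \cite{dumas2023regularity}, but the forward implication is exactly what Lemma \ref{lem:decinteg} is set up for (decompose into positive-definite spherical functions, dominate by the uniform $C^{(r,\delta)}$ bound, differentiate under the integral), and the converse via the closed graph theorem applied to $c\mapsto\sum_n c_n\varphi_n$ from $\ell^1$ into the Fr\'echet space $C^{(r,\delta)}(U)$ of Definition \ref{def:holdermultivar} is the standard mechanism for upgrading individual membership to uniform boundedness. The two facts your argument genuinely needs --- that $\vert\varphi_\lambda\vert\le\varphi_\lambda(e)=1$ for positive-definite spherical functions, and that nonnegative $\ell^1$-combinations of them are again continuous positive-definite $K$-bi-invariant functions, hence matrix coefficients by the GNS construction --- are both correctly supplied.
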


\subsection{From \texorpdfstring{$K$}{K}-bi-invariant to \texorpdfstring{$K$}{K}-finite coefficients}\label{sec:kfinite} Let $G$ be a Lie group and $K$ a compact subgroup of $G$. 

\begin{definition}Let $\pi$ be a unitary representation of $G$ on $\mathcal{H}$ and $(\rho,V)$ a representation of $K$. We say that $\xi\in\mathcal{H}$ is \begin{itemize}
    \item $K$-finite if $\vspan(\pi(K)\xi)$ is finite dimensional,
    \item of $K$-type $V$ if $\vspan(\pi(K)\xi)\simeq V$ as a representation of $K$.
\end{itemize}
\end{definition}
Note that this definition of $K$-type $V$ is not standard.\medskip

The aim of this subsection is to prove that a regularity result on the space of all \textit{$K$-bi-invariant} coefficients of $G$ implies that the same regularity holds for all \textit{$K$-finite} coefficients of $G$. This was done in \cite[Section 5]{dumas2023regularity} for compact symmetric pairs, and then extended in \cite[Section 3.2]{dumas2024regularity} in the non-compact case. We here give a unified proof which holds under some mild assumptions on the pair $(G,K)$.

The first step is a lemma which first appeared in \cite[Lemma 2.2]{dLMdlS} in the case of compact groups, which will allow to smoothly transform a $K$-finite coefficient into invariant coefficients. The original proof relied on the fact that compact group have a rich finite dimensional representation theory, and that these representations are smooth. In general, we cannot use finite dimensional unitary representations, however it remains true that $G$ has a rich set of smooth coefficients.

If $\pi$ is a unitary representation of $G$ on an Hilbert space $H$, we say that $\xi\in H$ is a smooth vector if $g\mapsto \pi(g)\xi$ is a smooth map from $G$ to $H$. Clearly, matrix coefficients associated to smooth vector are smooth. Consider the Gårding subspace of $\pi$,
denoted $H_\infty$, which is the subspace of $H$ spanned by all vectors of the form $$\pi(f)\xi=\int_G f(g)\pi(g)\xi dg$$for $\xi\in H,f\in C^\infty_c(G)$, where $dg$ is a left-invariant Haar measure. Then it is well-known that $H_\infty$ is dense in $H$ and every vector of $H_\infty$ is smooth (\cite[Prop. 3.14, Thm. 3.15]{knapp2001representation}). In fact, it is a deep result of Dixmier and Malliavin (\cite{dixmier1978factorisations}) that the Gårding subspace coincides with the space of smooth vectors, but we do not need this.

Let $U=K\times K$. Let $(\rho,E)$ be a finite-dimensional unitary representation of $U$. For $g\in G$, let $U_g$ be the stabilizer of $g$ under the action of $U$ by left-right multiplication, i.e. $$U_g=\{(k_1,k_2)\in U\vert\ k_1gk_2^{-1}=g\}.$$Let $E_g=E^{U_g}$ the subspace of vectors fixed by $U_g$, and $P_g:E\to E_g$ the orthogonal projection. 

\begin{lem}\label{lem:smoothkfintokinv}
For any $g_0\in G$, there exists a smooth function $\psi:G\to B(E)$
 such that \begin{enumerate}
     \item $\forall u\in U,g\in G$, $\psi(u.g)=\psi(g)\circ \rho(u)^{-1}$,
     \item $\forall v_1,v_2\in E$, $g\mapsto \langle \psi(g)v_1,v_2\rangle$ is a matrix coefficient of a unitary representation of $G$,
     \item $\psi(g_0)=P_{g_0}$.
 \end{enumerate}
\end{lem}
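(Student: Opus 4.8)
The plan is to realize $\psi$ by averaging a single scalar smooth matrix coefficient of $G$ against the representation $\rho$ over the compact group $U$. Concretely, I would start from a smooth matrix coefficient $\phi(g)=\langle\pi(g)\eta_1,\eta_2\rangle$ of a unitary representation $(\pi,H)$ of $G$, with $\eta_1,\eta_2\in H_\infty$ smooth (e.g. Gårding) vectors, and set
\[
\psi(g)=\int_U \phi(u\cdot g)\,\rho(u)\,du\ \in\ B(E),
\]
where $u\cdot g=k_1gk_2^{-1}$ for $u=(k_1,k_2)$. Smoothness of $\psi$ is then immediate, since the integrand is smooth in $g$ (as $\eta_1$ is a smooth vector) and $U$ is compact, so one may differentiate under the integral sign. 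Property $(1)$ is a direct change of variables: in $\psi\big((k_1',k_2')\cdot g\big)$ one substitutes $a=k_1k_1'$, $b=k_2k_2'$, which turns the argument of $\phi$ into $agb^{-1}=(a,b)\cdot g$ and produces the factor $\rho(a,b)\rho(u')^{-1}$; by right-invariance of the Haar measure on $U$ this gives $\psi(u'\cdot g)=\psi(g)\rho(u')^{-1}$.

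For property $(2)$ I would exploit that $\rho$ is finite dimensional. Since $\widehat{K\times K}=\widehat K\times\widehat K$, the coefficient $c(k_1,k_2)=\langle\rho(k_1,k_2)v_1,v_2\rangle$ is a finite sum of products $f(k_1)h(k_2)$ of matrix coefficients of $K$, so by bilinearity it suffices to treat one such term. Expanding $\phi(k_1gk_2^{-1})=\langle\pi(g)\pi(k_2)^{-1}\eta_1,\pi(k_1)^{-1}\eta_2\rangle$ and pulling the scalars $h(k_2)$ and $\overline{f(k_1)}$ into the linear and conjugate-linear slots respectively, one obtains
\[
\langle\psi(g)v_1,v_2\rangle=\big\langle\pi(g)\zeta,\zeta'\big\rangle,\qquad \zeta=\int_K h(k_2)\pi(k_2)^{-1}\eta_1\,dk_2,\quad \zeta'=\int_K\overline{f(k_1)}\,\pi(k_1)^{-1}\eta_2\,dk_1,
\]
the vectors $\zeta,\zeta'\in H$ being honest Bochner integrals over the compact group $K$. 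Hence $g\mapsto\langle\psi(g)v_1,v_2\rangle$ is a finite sum of matrix coefficients of $\pi$, i.e. a matrix coefficient of a unitary representation of $G$.

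The real work is property $(3)$, i.e. arranging $\psi(g_0)=P_{g_0}$. Observe first that $(1)$ already forces $\psi(g_0)=\psi(g_0)P_{g_0}$, so $P_{g_0}$ is a consistent target. The value $\psi(g_0)=\int_U\phi(u\cdot g_0)\rho(u)\,du$ depends on $\phi$ only through $F(u)=\phi(u\cdot g_0)$, which is smooth and right-$U_{g_0}$-invariant. Here I would combine two ingredients. First, matrix coefficients are rich enough to prescribe $F$ freely: by Dixmier--Malliavin every element of $C^\infty_c(G)$ is a finite sum of matrix coefficients of the regular representation, and the orbit $U\cdot g_0$ is a compact embedded submanifold of $G$ diffeomorphic to $U/U_{g_0}$; thus any smooth right-$U_{g_0}$-invariant $F$ descends to the orbit, extends to some $\phi\in C^\infty_c(G)$, and is realized as $u\mapsto\phi(u\cdot g_0)$. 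Second, Peter--Weyl and Schur orthogonality show that $F\mapsto\int_U F(u)\rho(u)\,du$ maps $L^2(U)$ onto the linear span of $\rho(U)$ in $B(E)$, which contains $P_{g_0}=\int_{U_{g_0}}\rho(v)\,dv$. Choosing $F_0$ with $\int_U F_0\,\rho\,du=P_{g_0}$ and replacing it by its right-average $\widetilde F_0(u)=\int_{U_{g_0}}F_0(uv)\,dv$ enforces right-$U_{g_0}$-invariance while preserving the integral, since $\int_U\widetilde F_0\,\rho\,du=P_{g_0}\!\int_{U_{g_0}}\rho(v)^{-1}\,dv=P_{g_0}^2=P_{g_0}$.

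I expect the main obstacle to be precisely this last step: one must meet the equivariance constraint (right-$U_{g_0}$-invariance of $F$), the regularity constraint (smoothness of $F$, so that $\psi$ is smooth and $\phi$ is a genuine matrix coefficient), and hit the operator $P_{g_0}$ \emph{exactly} rather than approximately. The averaging device reconciles invariance with the exact value, passing through $C^\infty_c(G)$ and the regular representation reconciles smoothness with being a matrix coefficient, and the surjectivity onto $\mathrm{span}\,\rho(U)$ — where the finite dimensionality of $\rho$ and Peter--Weyl are essential — is what allows $P_{g_0}$ to be attained on the nose.
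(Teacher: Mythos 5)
Your construction of $\psi$ and your verification of properties $(1)$ and $(2)$ are essentially the paper's: average $\phi(u\cdot g)$ against $\rho(u)$ over the compact group $U$, get $(1)$ by right-invariance of the Haar measure, and get $(2)$ by decomposing $\rho$ into matrix coefficients of $K\times K$ and absorbing the scalars into Bochner integrals of the vectors over $K$. Where you genuinely diverge is property $(3)$, and your route is correct but different. The paper never tries to hit $P_{g_0}$ exactly at the level of the input $\phi$: it starts from the measurable (operator-valued) function $x\mapsto \rho(s(x))^{-1}$ built from a section of the orbit, which would give $\psi(g_0)=P_{g_0}$ but is not a coefficient; it then approximates it in $L^1$ by continuous functions, extends by Tietze, approximates again by coefficients with Gårding vectors via Gelfand--Raikov, and exploits that $\psi(g_0)$ automatically kills $E_{g_0}^{\bot}$ together with the \emph{openness} of the condition $\rank\psi(g_0)=\dim E_{g_0}$, so that a final post-composition with a fixed operator $A$ satisfying $A\psi(g_0)=P_{g_0}$ finishes the proof. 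You instead solve the moment problem $\int_U F(u)\rho(u)\,du=P_{g_0}$ exactly with a smooth right-$U_{g_0}$-invariant scalar $F$ (Peter--Weyl/Schur orthogonality plus the averaging trick over $U_{g_0}$, which is sound since $P_{g_0}\in\vspan\rho(U)$ and the averaged function has the same integral against $\rho$), extend it from the compact embedded orbit to an element of $C^\infty_c(G)$, and invoke Dixmier--Malliavin to realize that extension as an exact finite sum of coefficients of the regular representation with smooth vectors. What each approach buys: yours is shorter and avoids the rank-stability and correction-operator gymnastics, at the cost of the Dixmier--Malliavin factorization theorem, which the paper explicitly goes out of its way not to use (it only needs density and smoothness of the Gårding subspace); the paper's argument is more elementary and only needs approximate realizability of continuous functions by coefficients. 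Both proofs are valid; yours also shows the scalar-valued input $\phi$ suffices, whereas the paper works with $B(E)$-valued $\phi$.
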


\begin{proof}
    Let $F$ be the set of functions $\phi:G\to B(V)$ such that for any $v_1,v_2\in E$, the map $g\mapsto \langle \psi(g)v_1,v_2\rangle$ is a matrix coefficients of a unitary representation of $G$ with vectors in the Gårding subspace of that representation. Then such a map $F$ is smooth and verifies $(2)$.

    If $\phi\in F$, define $\psi(g)=\int_U \phi(u.g)\rho(u)du$ where $du$ is the Haar probability measure of $U$. Since $U$ is compact and $\phi$ smooth, $\phi$ is bounded on the compact orbit $U.g$ so the integral makes sense. Then a simple change of variables show that $\psi$ verifies $(1)$. Let $e_1,\dots,e_d$ be an orthonormal basis of $E$. Let $\pi_{ij}$ be unitary representations of $G$ and $\xi_{ij},\eta_{ij}$ vectors in the Gårding subspace of $\pi_{ij}$ such that for all $1\leq i,j\leq d$ and $g\in G$,$$\langle \phi(g)e_i,e_j\rangle=\langle \pi_{ij}(g)\xi_{ij},\eta_{ij}\rangle.$$
    Let $u=(k_1,k_2)\in U$, then $\rho(u)=\rho(k_1,1_K)\rho(1_K,k_2)$. We define functions $K\to \mathbb{C}$ such that $$\rho(1_K,k)e_i=\sum_{j=1}^d \lambda_{ij}(k)e_j$$and$$\rho(k,1_K)e_i=\sum_{j=1}^d \mu_{ij}(k)e_j.$$Then \begin{multline*}
   \langle \psi(g)e_i,e_j\rangle   \\ \begin{aligned}&= \int_U \langle \phi(u.g)\rho(u)e_i,e_j\rangle du \\
     & =  \int_{K\times K}\sum_{p,q=1}^d \mu_{pq}(k_1)\lambda_{ip}(k_2) \langle \phi(k_1gk_2^{-1})e_q,e_j\rangle dk_1dk_2\\
     & =  \int_{K\times K}\sum_{p,q=1}^d \mu_{pq}(k_1)\lambda_{ip}(k_2) \langle \pi_{qj}(k_1gk_2^{-1})\xi_{qj},\eta_{qj}\rangle dk_1dk_2\\
     & = \sum_{p,q=1}^d\int_{K\times K} \langle \pi_{qj}(g)\left(\lambda_{ip}(k_2)\pi_{qj}(k_2^{-1}) \xi_{qj}\right),\overline{\mu_{pq}(k_1)}\pi_{qj}(k_1^{-1})\eta_{qj}\rangle dk_1dk_2\\
     & =  \sum_{p,q=1}^d \left\langle \pi_{qj}(g)\left(\int_K \lambda_{ip}(k_2)\pi_{qj}(k_2^{-1})\xi_{qj}dk_2\right),\left(\int_K \overline{\mu_{pq}(k_1)}\pi_{qj}(k_1^{-1})\eta_{qj} dk_1\right) \right\rangle.
     \end{aligned}
\end{multline*}
Since $\xi_{qj}$ is in the Gårding subspace of $\pi_{qj}$, it is a finite linear combination of vectors of the form $$\pi_{qj}(f)\xi=\int_G f(g)\pi_{qj}(g)\xi \ dg.$$Then by left invariance of $dg$, \begin{align*}
    \int_K \lambda_{ip}(k)\pi_{qj}(k^{-1})\pi_{qj}(f)\xi\ dk & = \int_K\int_G \lambda_{ip}(k)f(g)\pi_{qj}(k^{-1}g)\xi \ dg dk \\
    & = \int_K\int_G  \lambda_{ip}(k) f(kg) \pi_{qj}(g)\xi \ dgdk\\
    & = \int_G \left(\int_K \lambda_{ip}(k) f(kg) dk \right) \pi_{qj}(g)\xi \ dg
\end{align*}so $ \int_K \lambda_{ip}(k)\pi_{qj}(k^{-1})\pi_{qj}(f)\xi\ dk$ is in the Gårding subspace and by linearity, so is $\int_K \lambda_{ip}(k_2)\pi_{qj}(k_2^{-1})\xi_{qj}dk_2$. Similarly, $\int_K \overline{\mu_{pq}(k_1)}\pi_{qj}(k_1^{-1})\eta_{qj} dk_1$ is in the Gårding subspace of $\pi_{qj}$. Thus, $\langle \psi(g)e_i,e_j\rangle$ is a matrix coefficient of $\bigoplus_{q=1}^d \pi_{qj}^{\oplus d}$ whose vectors are in the Gårding subspace. By linearity, this is true for $v_1,v_2\in E$. Thus, we showed that if $\phi\in F$, then $\psi\in F$, hence $\psi$ is smooth and verifies $(2)$.

It remains to show that there exists $\phi\in F$ such that $\psi(g_0)=P_{g_0}$. Notice that if $u\in U_{g_0}$, then $$\psi(g_0)=\psi(u.g_0)=\psi(g_0)\circ \rho(u)^{-1}.$$
Thus $E_{g_0}^\bot=\sum_{u\in U_{g_0}}\Ima(\rho(u)-\Id)\subset \ker \psi(g_0)$. Thus, condition $(3)$ is a "maximal rank" condition.

First, let us find $\phi\in F$ such that $\rank\psi(g_0)=\dim E_{g_0}$. Consider $O\simeq U/U_{g_0}$ the $U$-orbit of $g_0$ in $G$. Let $s$ be a measurable section, that is to say $s:O\mapsto U$ such that $s(u.g_0)\in uU_{g_0}$. Let $\phi:O\mapsto B(E)$ be the map $x\mapsto \rho(s(x))^{-1}$. Then $\psi:x\mapsto \int_U \rho(s(u.x)^{-1}u)du$ is such that $\psi(g_0)$ is the identity on $E_{g_0}$, and by the above discussion $0$ on $E_{g_0}^\bot$. Thus $\psi(g_0)=P_{g_0}$. Let $\mu$ be the image of the Haar measure on $O$ by the map $p:u\mapsto u.g_0$. Then $\phi\in L^1(O;B(E),\mu)$. By density of continuous function, there are continuous maps $f:O\to B(E)$ arbitrarily close to $\phi$ in $\Vert.\Vert_1$. But then, \begin{align*}
    \left\Vert \int_U f(u.g_0)\rho(u)du-\int_U \phi(u.g_0)\rho(u)du \right\Vert & \leq  \int_U \Vert f(u.g_0)-\phi(u.g_0)\Vert du  \\
     & \leq  \int_U \Vert (f-\phi)\circ p\Vert du\\
     & \leq  \int_O \Vert f-\phi\Vert d\mu\\
     & \leq  \Vert f-\phi\Vert_1.
\end{align*}

So we can take $f$ close enough so that $\rank \int_U f(u.g_0)\rho(u)du=\rank P_{g_0}$. Then since $O$ is closed in $G$ normal, by Tietze extension theorem, we can extend $f$ to a continuous map $\phi:G\mapsto B(E)$.

Let $L$ be a compact subset of $G$ containing $g_0$ and $\varepsilon>0$. Let $\phi_{ij}:g\mapsto \langle \phi(g)e_i,e_j\rangle$. By the Gelfand-Raikov's theorem, there exists $\pi_{ij}$ a unitary representation of $G$ and a matrix coefficient $\varphi_{ij}$ of $\pi_{ij}$ such that $$\underset{g\in L}{\sup} \vert \phi_{ij}(g)-\varphi_{ij}(g)\vert \leq \varepsilon.$$Then by density of the Gårding subspace, we may find a coefficient $\Tilde{\varphi}_{ij}$ of $\pi_{ij}$ whose vectors are in the Gårding subspace of $\pi_{ij}$ and such that $$\underset{g\in L}{\sup} \vert \varphi_{ij}(g)-\Tilde{\varphi}_{ij}(g)\vert \leq \varepsilon.$$
Then, define $\varphi:G\to B(E)$ by $\varphi(g)e_i=\sum_{j=1}^d \Tilde{\varphi}_{ij}(g)e_j$, by construction $\varphi\in F$. So we can find find $\varphi$ in $F$ arbitrarily close to $\phi$ on any compact subset containing $g_0$, in particular on the orbit $O$. Thus, for $\varepsilon$ small enough, $\int_{U} \varphi(u.g_0)\rho(u)du$ is of rank $\dim E_{g_0}$.

Finally, we get $\phi\in F$ such that $\psi(g_0)$ has rank $\dim E_{g_0}$ and is zero on $E_{g_0}^\bot$. Thus there is $A\in B(E)$ such that $A\psi(g_0)=P_{g_0}$. Replace $\phi$ by $A\phi$ and we get the result.
\end{proof}

With this lemma in hand, we can now prove regularity for $K$-invariant coefficients. However, although Lemma \ref{lem:smoothkfintokinv} holds without assumption, we need to assume that the pair $(G,K)$ has some sort of a $KAK$ decomposition to carry on - in the spirit of the well-known $KAK$ decomposition of semisimple Lie groups.

\begin{definition}\label{def:smoothkak} Let $G$ be a Lie group and $K$ a compact subgroup of $G$. Let $G_r$ be an open subset of $G$ which is invariant by multiplication on the left and right by elements of $K$. We say that $(G_r,K)$ has a well-behaved $KAK$ decomposition if there exists a submanifold $A_r$ of $G$ and a subgroup $M$ of $K$ such that:
\begin{itemize}
    \item for any $g\in G_r$, there exists a unique $a(g)\in A_r$ such that $g\in Ka(g)K$;
    \item the map $a:G_r\to A_r$ is smooth;
    \item the stabilizer of $a\in A_r$ under the left-right multiplication action of $K$ is $\Delta(M)=\{(k,k) \vert \ k\in M \}$ and thus independent of $a$;
    \item for any $x\in G_r$, there exists a neighborhood $O_x$ and a choice of $g\mapsto k_i(g)$ defined on $O_x$, $i=1,2$ such that $k_i$ is smooth and for any $g\in O_x$, $g=k_1(g)a(g)k_2(g)^{-1}$ 
\end{itemize}
\end{definition}
The existence of a $KAK$ decomposition of a semisimple Lie group is well-known, and it is proven in \cite[Prop. 3.3]{dumas2024regularity} that it is well-behaved on a specific dense open subset. A similar result holds for compact semisimple Lie groups (\cite[Prop. 5.8]{dumas2023regularity}) and we will show this for Cartan motion groups in Section \ref{sec:kak}.\medskip

Let $\pi$ be a unitary representation of $G$ on $\mathcal{H}$ and $\xi,\eta\in\mathcal{H}$ of $K$-type $V,W$ respectively, for $V,W$ irreducible representations of $K$. Denote $V_\xi=\vspan(\pi(K)\xi)$. Then there is an isomorphism $i_\xi:V\to V_\xi\subset \mathcal{H}$, denote $\xi_0=i_\xi^{-1}(\xi)$. Similarly, define $V_\eta$, $i_\eta$ and $\eta_0$. Then the map \begin{equation}\label{eq:defi_f}\fonction{f}{B(\mathcal{H})}{L(V,W^*)\simeq V^*\otimes W^*}{A}{i_\eta^*Ai_\xi}\end{equation}is $K\times K$ equivariant.

For the associated matrix coefficient, we have $\varphi(g)=\langle\pi(g)\xi,\eta\rangle=\langle f(\pi(g))\xi_0,\eta_0\rangle$.

Now denote $(\rho,E)$ the irreducible representation of $U=K\times K$ on $V^*\otimes W^*$. The $U$-equivariance of $f$ means that for any $(k,k')\in U$ and $A\in B(\mathcal{H})$, we have \begin{equation}\label{eq:equiv_f}f(\pi(k)A\pi(k')^{-1})=\rho(k,k')(f(A)).\end{equation}Furthermore, there are $v_1,\cdots,v_n\in E$ and $\xi_1,\cdots,\xi_n,\eta_1,\cdots,\eta_n\in\mathcal{H}$ such that\begin{equation}\label{eq:dec_f}f(A)=\sum_{i=1}^n \langle A\xi_i,\eta_i\rangle v_i.\end{equation}

\begin{prop}\label{prop:regktypeV}Let $G_r$ be an open subset of $G$ such that $(G_r,K)$ has a well-behaved $KAK$-decomposition. Assume that any $K$-bi-invariant matrix coefficient $\varphi$ of a unitary representation of $G$ is in $C^{(r,\delta)}(G_r)$, then the map $f\circ \pi$ is in $C^{(r,\delta)}(G_r,E)$.
\end{prop}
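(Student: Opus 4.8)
The plan is to prove regularity of $f\circ\pi$ by locally reducing it to $K$-bi-invariant coefficients, using the smooth interpolating function $\psi$ from Lemma \ref{lem:smoothkfintokinv} together with the well-behaved $KAK$ decomposition. Since regularity is a local property and $C^{(r,\delta)}$-regularity on $G_r$ can be checked in charts (and on any compact neighborhood, by Remark \ref{rem:holder}), I would fix an arbitrary point $g_0\in G_r$ and work in a neighborhood $O_{g_0}$ on which the $KAK$ decomposition provides smooth maps $g=k_1(g)a(g)k_2(g)^{-1}$. The key idea is that near $g_0$, the value $f(\pi(g))$ should be recoverable from the \emph{$K$-bi-invariant} data $g\mapsto f(\pi(a(g)))$, which is controlled by the hypothesis, after twisting by the smooth factors $\rho(k_1(g),k_2(g))$ coming from equivariance \eqref{eq:equiv_f}.

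\medskip

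\emph{First} I would use equivariance: for $g\in O_{g_0}$, writing $g=k_1(g)a(g)k_2(g)^{-1}$, equation \eqref{eq:equiv_f} gives
\[
f(\pi(g))=\rho\bigl(k_1(g),k_2(g)\bigr)\,f\bigl(\pi(a(g))\bigr).
\]
Since $k_1,k_2,a$ are smooth on $O_{g_0}$ and $\rho$ is a finite-dimensional (hence smooth) representation, the prefactor $g\mapsto \rho(k_1(g),k_2(g))$ is smooth, so by the Leibniz rule it suffices to prove that $g\mapsto f(\pi(a(g)))$ lies in $C^{(r,\delta)}$. By Lemma \ref{lem:precomposition} and smoothness of $a:G_r\to A_r$, this in turn reduces to showing that the restriction $a\mapsto f(\pi(a))$ is $C^{(r,\delta)}$ on the submanifold $A_r$ near $a(g_0)$.

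\medskip

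\emph{Next}, to analyze $a\mapsto f(\pi(a))$ I would invoke Lemma \ref{lem:smoothkfintokinv} at the point $a_0:=a(g_0)$ to produce a smooth $\psi:G\to B(E)$ with $\psi(a_0)=P_{a_0}$, whose matrix entries are genuine matrix coefficients of unitary representations of $G$, and which satisfies the twisting relation $\psi(u.g)=\psi(g)\circ\rho(u)^{-1}$. The point of the stabilizer condition in Definition \ref{def:smoothkak} is that on $A_r$ the stabilizer is the constant $\Delta(M)$, so $E_{a}=E^{\Delta(M)}$ is independent of $a$ and the vector $f(\pi(a))$ actually lies in this fixed subspace $E_{a_0}=\Ima P_{a_0}$ for all $a\in A_r$ (this uses that $\xi,\eta$ have $K$-type coming through $\rho$, so $f(\pi(a))$ is $\Delta(M)$-invariant). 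Consequently $P_{a_0}f(\pi(a))=f(\pi(a))$, and I would rewrite
\[
f(\pi(a))=\psi(a_0)\,f(\pi(a)).
\]
The goal is then to replace the fixed $\psi(a_0)$ by the variable $\psi(a)$ at the cost of a smooth correction: using the twisting relation and the $KAK$ structure, $\psi(a)$ and $\psi(a_0)$ differ by smooth factors, and pairing $\psi(a)$ against $f(\pi(a))$ via \eqref{eq:dec_f} yields, for each basis pairing, a finite sum of expressions of the form $\langle\psi(a)v_i,\cdot\rangle\langle\pi(a)\xi_i,\eta_i\rangle$. The products $a\mapsto\langle\psi(a)v,w\rangle\,\langle\pi(a)\xi_i,\eta_i\rangle$ are, after averaging over $K$ on both sides, honest $K$-bi-invariant matrix coefficients (of a tensor-product representation), hence in $C^{(r,\delta)}(G_r)$ by hypothesis, and restricting to $A_r$ and multiplying by the smooth $\psi$-entries keeps us in $C^{(r,\delta)}$ by the algebra and precomposition properties of these Hölder spaces.

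\medskip

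\emph{The hard part} will be making precise the passage from the scalar $K$-finite coefficients $\langle\pi(a)\xi_i,\eta_i\rangle$ to \emph{genuinely $K$-bi-invariant} coefficients to which the hypothesis applies: the honest bi-invariance must be manufactured by integrating $\psi$ against the $K\times K$ action, and one must check that the smooth $\psi$-entries exactly absorb the failure of $\langle\pi(a)\xi_i,\eta_i\rangle$ to be bi-invariant, so that the product becomes bi-invariant without destroying regularity. This is precisely why Lemma \ref{lem:smoothkfintokinv} is engineered so that the entries of $\psi$ are themselves matrix coefficients (allowing the product to be realized inside a single unitary representation via a tensor product) and why the constancy of the stabilizer $\Delta(M)$ on $A_r$ is essential — it guarantees that $f(\pi(a))$ stays in one fixed $P_{a_0}$-invariant subspace so that a single application of $\psi(a_0)$ captures it uniformly in $a$. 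Once this bookkeeping is done, smoothness of $a,k_1,k_2$, the Leibniz rule, Lemma \ref{lem:precomposition}, and the stability of $C^{(r,\delta)}$ under multiplication by smooth functions assemble the local estimate, and local-to-global (Remark \ref{rem:holder}) finishes the proof on all of $G_r$.
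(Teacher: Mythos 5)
Your overall architecture is the paper's: twist $f\circ\pi$ by the operator-valued function $\psi$ of Lemma \ref{lem:smoothkfintokinv} to produce the $K$-bi-invariant map $\tilde f(g)=\psi(g)(f(\pi(g)))$, recognize its entries as $K$-bi-invariant coefficients of tensor-product representations (so the hypothesis applies), and then undo the twist using the well-behaved $KAK$ decomposition and the constancy of the stabilizer $\Delta(M)$ on $A_r$. Your preliminary reduction to the submanifold $A_r$ via $f(\pi(g))=\rho(k_1(g),k_2(g))f(\pi(a(g)))$ is a harmless reordering (the paper keeps the variable in $G_r$ and folds $k_1,k_2$ into the final correction operator $\Phi$), and your observation that $f(\pi(a))\in E_0=E^{\Delta(M)}$ for all $a\in A_r$ is exactly the paper's.

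The gap is in the final step, where you must recover $f(\pi(a))$ from the regular object $\tilde f(a)=\psi(a)(f(\pi(a)))$. ``Multiplying by the smooth $\psi$-entries'' goes in the wrong direction --- that is how $\tilde f$ is built, not how $f\circ\pi$ is extracted from it --- and ``a single application of $\psi(a_0)$ captures it uniformly in $a$'' is only the tautology $P_{a_0}f(\pi(a))=f(\pi(a))$, which transfers no regularity because $a\mapsto\psi(a_0)(f(\pi(a)))$ is not a sum of bi-invariant coefficients (bi-invariance requires evaluating $\psi$ at the same moving point as $\pi$). What is actually needed is a smooth \emph{left inverse} of $\psi(a)$ on $E_0$ for $a$ near $a_0$: since $\psi(a_0)=P_{a_0}$ restricts to the identity on $E_0$ and $f(\pi(a))$ stays in the fixed subspace $E_0$, smoothness of $\psi$ makes the relevant block $A(a)$ of $\psi(a)$ invertible on a neighborhood of $a_0$, so that $f(\pi(a))$ is obtained from $\tilde f(a)$ by applying the smooth operator built from $A(a)^{-1}$, and the Leibniz rule concludes. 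This is precisely the paper's operator $\Phi$, and the reason condition (3) of Lemma \ref{lem:smoothkfintokinv} is a ``maximal rank'' condition. Your proposal has all the ingredients for this step but never states the invertibility argument, and the two sentences quoted above suggest a confusion about which direction the twist is being undone.
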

\begin{proof}
Let $g_0\in G_r$ and $\psi$ given by Lemma \ref{lem:smoothkfintokinv} for the representation $(\rho,E)$. Let $\Tilde{f}:g\mapsto \psi(g)(f(\pi(g)))$. By \eqref{eq:equiv_f} and $(1)$ of Lemma \ref{lem:smoothkfintokinv}, we have \begin{equation}\label{eq:kbiinv_tildef}\Tilde{f}(u.g)=\psi(u.g)(f(\pi(u.g)))=\psi(g)\rho(u)^{-1}\rho(u)(f(\pi(g))=\Tilde{f}(g)\end{equation}
so $\Tilde{f}$ is a $K$-bi-invariant map.

Let $(e_1,\cdots,e_d)$ be an orthonormal basis of $V_\rho$, by $(2)$ of Lemma \ref{lem:smoothkfintokinv} there are $(\pi_{ij},\mathcal{H}_{ij})$ unitary representations of $G$ and $a_{ij},b_{ij}\in \mathcal{H}_{ij}$ such that $$\langle\psi(g)v_i,e_j\rangle=\langle\pi_{ij}(g)a_{ij},b_{ij}\rangle$$so $\psi(g)v_i=\sum_{j=1}^d \langle\pi_{ij}(g)a_{ij},b_{ij}\rangle e_j$ and finally with \eqref{eq:dec_f}, \begin{equation}\label{eq:tildef}\Tilde{f}(g)=\sum_{i=1}^n\sum_{j=1}^d \langle(\pi_{ij}\otimes \pi)(g)(a_{ij}\otimes\xi_i),b_{ij}\otimes \eta_i\rangle e_j.\end{equation}

Hence, $\Tilde{f}$ is a sum of $K$-bi-invariant matrix coefficients of unitary representations of $G$, so by the hypothesis, $\Tilde{f}\in C^{(r,\delta)}(G_r,E)$.\smallskip

Since $(G_r,K)$ has a well-behaved $KAK$ decomposition, consider $A_r,M$ from Definition \ref{def:smoothkak}. Then for any $a\in A_r$, $U_a=\Delta(M)$. Thus, $E_a=E_0$ is independent of $a\in A_r$. If $g=(k_1,k_2).a=k_1ak_2^{-1}$, we have $(k,k')\in U_g$ if an only if $(k_1^{-1}kk_1,k_2^{-1}k'k_2)\in \Delta(M)$ and so $E_g=\rho(k_1,k_2)E_0$.

Let $g_0=k_0a_0k_0^{'-1}$ and $E_1=E_{g_0}$. Since $\psi(g_0)=P_{g_0}$, there is an orthonormal basis adapted to $E_1$ such that $$\psi(g_0)=\begin{pmatrix}\Id&0\\0&0\end{pmatrix}.$$ Furthermore, since $\psi$ is smooth, there is $A_{g_0}$ neighborhood of $g_0$ such that $$\psi(g)=\begin{pmatrix}A(g)&*\\ *&*\end{pmatrix}$$
with $g\mapsto A(g)$ smooth, $A(g)$ invertible for any $g\in A_{g_0}$. Up to restricting $A_{g_0}$, by assumption on $(G_r,K)$, we have $g=k_1(g)a(g)k_2(g)^{-1}$ with $k_1,k_2$ smooth on $A_{g_0}$.

By \eqref{eq:kbiinv_tildef}, for any $g\in A_{g_0}$, we have $$\Tilde{f}(g)=\Tilde{f}(a(g))=\Tilde{f}(k_0a(g)k_0^{'-1}).$$
But then $f(\pi(k_0a(g)k_0^{'-1}))\in E_{k_0a(g)k_0^{'-1}}=\rho(k_0,k_0')E_0=E_1$. Set $$\Phi(g)=\rho(k_1(g)k_0^{-1},k_2(g)k_0^{'-1})\begin{pmatrix}
    A(k_0a(g)k_0^{'-1})^{-1} & 0\\0&0
\end{pmatrix},$$ it is a smooth map on $A_{g_0}$ because $A$ is smooth invertible, $k_1,k_2$ are smooth and $\rho$ is a finite dimensional representation of $U$ thus smooth. Since $f(\pi(k_0a(g)k_0^{'-1}))\in E_1$, we have \begin{align*}
    \Phi(g)(\Tilde{f}(g)) & =  \Phi(g)(\Tilde{f}(\pi(k_0a(g)k_0^{'-1})))  \\
     & =  \Phi(g)\psi(g)(f(\pi(k_0a(g)k_0^{'-1})))\\
     & =  \rho(k_1(g)k_0^{-1},k_2(g)k_0^{'-1})(f(\pi(k_0a(g)k_0^{'-1})))\\
     & =  f(\pi(k_1(g)a(g)k_2(g)^{-1}))\\
     & =  f(\pi(g))
\end{align*}

Now let $B:B(V)\times V\to V$ be the bilinear map sending $(u,v)$ to $u(v)$. We showed that on $A_{g_0}$, $f\circ \pi=B\circ (\Phi,\Tilde{f})$. Since $\Phi$ is smooth on $A_{g_0}$ and $\Tilde{f}\in C^{(r,\delta)}(G_r,E)$, we get by Leibniz formula that $f\circ g\in C^{(r,\delta)}(A_{g_0},E)$.

So for any $g_0\in G_r$, there exists a neighborhood $A_{g_0}$ such that $f\circ \pi \in C^{(r,\delta)}(A_{g_0},E)$. Thus, $f\circ \pi \in C^{(r,\delta)}(G_r,E)$.
\end{proof}

\begin{theorem}\label{thm:kfinite}Let $G$ be a Lie group, $G_r$ be an open subset of $G$ and $K$ a compact subgroup of $G$ such that $(G_r,K)$ has a well-behaved $KAK$-decomposition. The optimal regularity of $K$-bi-invariant matrix coefficient of unitary representations of $G$ on $G_r$ is equal to the optimal regularity of $K$-finite matrix coefficients of unitary representations of $G$ on $G_r$.
\end{theorem}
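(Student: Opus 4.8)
The plan is to prove the equality by establishing the two inequalities between the optimal exponents separately. One direction is essentially tautological: every $K$-bi-invariant vector is $K$-finite, since it spans a copy of the trivial representation of $K$. Thus the class of $K$-bi-invariant matrix coefficients is contained in the class of $K$-finite ones. Consequently, if every $K$-finite coefficient of a unitary representation of $G$ lies in $C^{(r,\delta)}(G_r)$, then a fortiori so does every $K$-bi-invariant one. Since a larger class of functions can only be harder to make uniformly regular, this shows that the optimal regularity of $K$-finite coefficients is at most the optimal regularity of $K$-bi-invariant coefficients.

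For the reverse inequality I would assume that every $K$-bi-invariant matrix coefficient of a unitary representation of $G$ lies in $C^{(r,\delta)}(G_r)$ and deduce the same conclusion for every $K$-finite coefficient. The first step is a reduction to irreducible $K$-types. Given a $K$-finite coefficient $g\mapsto\langle\pi(g)\xi,\eta\rangle$, the spaces $\vspan(\pi(K)\xi)$ and $\vspan(\pi(K)\eta)$ are finite-dimensional $K$-modules, so they decompose into $K$-irreducibles. Writing $\xi=\sum_i\xi_i$ and $\eta=\sum_j\eta_j$ according to such decompositions, each nonzero $\xi_i$ is of irreducible $K$-type $V_i$ and each nonzero $\eta_j$ of irreducible $K$-type $W_j$, because $\vspan(\pi(K)\xi_i)$ is a nonzero submodule of an irreducible module and hence equals it. Since $C^{(r,\delta)}(G_r)$ is a vector space and $\langle\pi(g)\xi,\eta\rangle=\sum_{i,j}\langle\pi(g)\xi_i,\eta_j\rangle$ is a finite sum, it suffices to treat a single coefficient with $\xi$ of irreducible $K$-type $V$ and $\eta$ of irreducible $K$-type $W$.

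In that situation I would invoke Proposition \ref{prop:regktypeV} directly. With $f$ the $K\times K$-equivariant map of \eqref{eq:defi_f} valued in $E=V^*\otimes W^*$, and using that $(G_r,K)$ has a well-behaved $KAK$-decomposition, the hypothesis yields $f\circ\pi\in C^{(r,\delta)}(G_r,E)$. The scalar coefficient is then recovered as $\varphi(g)=\langle\pi(g)\xi,\eta\rangle=\langle f(\pi(g))\xi_0,\eta_0\rangle$, that is $\varphi=\ell\circ(f\circ\pi)$, where $\ell:E\to\C$, $\ell(v)=\langle v\,\xi_0,\eta_0\rangle$, is a fixed linear form (the evaluation against $\xi_0$ and $\eta_0$). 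As $E$ is finite-dimensional, $\ell$ is bounded, and postcomposition by a bounded linear map preserves the Hölder class: one has $D^k(\ell\circ(f\circ\pi))=\ell\circ D^k(f\circ\pi)$ for $k\le r$ and the Hölder seminorm of the top derivative is controlled by $\Vert\ell\Vert$ times that of $f\circ\pi$. Hence $\varphi\in C^{(r,\delta)}(G_r)$, and summing over $i,j$ gives the conclusion for arbitrary $K$-finite coefficients. This proves that the optimal regularity of $K$-bi-invariant coefficients is at most that of $K$-finite coefficients, and combined with the first paragraph the two coincide.

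The genuine difficulty of the whole argument is concentrated in Proposition \ref{prop:regktypeV} and, behind it, in Lemma \ref{lem:smoothkfintokinv} together with the existence of a well-behaved $KAK$-decomposition; these are the steps where the smoothing of a $K$-finite coefficient into bi-invariant ones and the local straightening of the orbit structure take place. Once these are available, the proof of the theorem proper requires only the isotypic reduction to irreducible $K$-types and the elementary observation that applying a fixed linear form preserves $C^{(r,\delta)}$, both of which are routine.
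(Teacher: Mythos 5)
Your proposal is correct and follows essentially the same route as the paper: the trivial inclusion for one inequality, reduction to irreducible $K$-types by decomposing $\vspan(\pi(K)\xi)$ and $\vspan(\pi(K)\eta)$, and an appeal to Proposition \ref{prop:regktypeV} followed by evaluation against $\xi_0,\eta_0$. The only (harmless) differences are the order of the two reduction steps and your explicit remark that postcomposing with the bounded linear form $\ell$ preserves $C^{(r,\delta)}$, which the paper leaves implicit.
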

\begin{proof}One inequality is trivial since $K$-bi-invariant coefficients are $K$-finite.

For the other inequality, let $(r,\delta)$ such that any $K$-bi-invariant matrix coefficient of unitary representations $G$ is in $C^{(r,\delta)}(G_r)$. Let $\varphi:g\mapsto \langle \pi(g)\xi,\eta \rangle$ be a $K$-finite matrix coefficient of a unitary representation.

If $\xi,\eta$ are of $K$-type $V,W$ respectively, with $V,W$ irreducible representations of $K$, we showed that $\varphi(g)=\langle\pi(g)\xi,\eta\rangle=\langle f(\pi(g))\xi_0,\eta_0\rangle$ and in Proposition \ref{prop:regktypeV} that $f\circ \pi\in C^{(r,\delta)}(G_r)$, thus $\varphi\in C^{(r,\delta)}(G_r)$.

For the general case, if $\xi,\eta$ are $K$-finite, $V_\xi,V_\eta$ are finite dimensional representations of $K$, so they decompose into a finite number of irreducible representations. Thus, $\varphi$ is a finite sum of matrix coefficient of the previous case, so $\varphi\in C^{(r,\delta)}(G_r)$.
\end{proof}

\section{Cartan motion groups}\label{sec:backgroundcartan}
\subsection{Flat symmetric spaces and Gelfand pairs}\label{sec:sym}
\begin{definition}\label{def:symspace}
    A symmetric space is a smooth manifold $M$ together with a smooth map $\mu:M\times M\to M$, denoted $\mu(x,y)=x\cdot y$, which verifies the following properties:\begin{enumerate}
    \item $\forall x\in M$, $x\cdot x=x$;
    \item $\forall x,y\in M$, $x\cdot (x\cdot y)= y$;
    \item $\forall x,y,z\in M$, $x\cdot (y\cdot z)=(x\cdot y)\cdot (x\cdot z)$;
    \item $\forall x\in M$, there is a neighborhood $U\subset M$ of $x$ such that if $y\in U$, $x\cdot y=y \Rightarrow x=y$.\end{enumerate}
\end{definition}
Let $G$ be a connected Lie group, $\sigma$ an involutive automorphism of $G$ and $G^\sigma$ the subgroup of fixed points of $\sigma$. If $K$ is a subgroup of $G$ such that $(G^\sigma)_0\subset K\subset G^\sigma$, then the quotient space $M=G/K$ carries a natural structure of symmetric space with $xK\cdot yK=x\sigma(x)^{-1}\sigma(y)K$ (\cite{loos1969symmetric}). When $K$ is compact, $(G,K)$ is a Gelfand pair, which we call a symmetric Gelfand pair, and $M$ is Riemannian.

Conversely given a Riemannian symmetric space $M$ and $o\in M$, there exists a canonical connected Lie group $G(M)$ called the group of displacements of $M$, which is a subgroup of the group $\mathrm{Aut}(M)$ of all automorphisms of symmetric space of $M$. Let $K(M)$ be the isotropy group of $o$ which is compact, then $(G(M),K(M))$ is a symmetric pair and $M\simeq G(M)/K(M)$ as a symmetric space.

If $M$ is a simply connected symmetric space, there are $M_0,M_c,M_{nc}$ respectively Euclidean, of compact type and of non-compact type such that $M=M_0\times M_c\times M_{nc}$. We say that $M$ is semisimple if $G(M)$ is a semisimple Lie group.

The following lemma can be found in \cite[Lemma 2.9]{dumas2023regularity}
\begin{lem}
\label{lem:indepgelfandpair}Let $(G,K)$ be a symmetric pair and $M=G/K$ the associated symmetric space. If $M$ is semisimple, then there is a bijection between spherical functions of $(G,K)$ and spherical functions of $(G(M),K(M))$, such that the image of $\varphi$ induces the same function as $\varphi$ on $M$.
\end{lem}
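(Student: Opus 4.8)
The plan is to show that, once pushed down to $M$, a spherical function becomes an object intrinsic to the symmetric space $(M,o)$, independent of which group presents $M$ as a homogeneous space. Since $G$ is connected and $K$ compact, a continuous $K$-bi-invariant function $\varphi$ on $G$ descends to a continuous $\tau(K)$-invariant function $\psi$ on $M=G/K$ via $\psi(gK)=\varphi(g)$, where $\tau\colon G\to\Aut(M)$ denotes the action (so $\tau(g)\cdot o=gK$); the assignment $\varphi\mapsto\psi$ is a bijection from $K$-bi-invariant functions on $G$ onto $\tau(K)$-invariant functions on $M$. Both pairs $(G,K)$ and $(G(M),K(M))$ produce functions on the \emph{same} $M$, so it suffices to check that the spherical condition for $(G,K)$ and the spherical condition for $(G(M),K(M))$ translate to the same condition on $\psi$.

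First I would rewrite the functional equation intrinsically. Putting $p=gK$, $q=yK$, the left-hand side of $\int_K\varphi(xky)\,dk=\varphi(x)\varphi(y)$ becomes $\int_K\psi(\tau(x)\tau(k)\cdot q)\,dk$, i.e. the average of $\psi$ over the translate by $\tau(x)$ of the isotropy orbit $\tau(K)\cdot q$, taken against the push-forward of the Haar measure of $K$. Calling this a mean-value operator $\mathcal M_q$, the spherical identity is equivalent to $(\mathcal M_q\psi)(p)=\psi(q)\psi(p)$ for all $p,q\in M$ together with the normalization $\psi(o)=1$. The operator $\mathcal M_q$ is well defined (replacing $x$ by $xk'$ changes nothing, since the orbit and its invariant measure are $\tau(K)$-invariant), and it depends only on the transitive action of $\tau(G)$ on $M$ and on the orbits of $\tau(K)$ with their invariant probability measures. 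The identical computation for $(G(M),K(M))$ gives an operator $\mathcal M_q^{G(M)}$ and the analogous characterization with $K(M)$-invariance. Thus the lemma reduces to the statement that the two presentations of $M$ induce the same isotropy orbits and measures and the same transitive action.

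This is where I would use the hypothesis that $M$ is semisimple, and I claim in fact $\tau(G)=G(M)$ and $\tau(K)=K(M)$ as subgroups of $\Aut(M)$. At the Lie-algebra level, writing $\mathfrak g=\mathfrak k\oplus\mathfrak p$ for the $\sigma$-eigenspace decomposition and identifying $\mathfrak p\cong T_oM$, the transvection algebra $\mathfrak g'=\mathfrak p\oplus[\mathfrak p,\mathfrak p]$ is an ideal of $\mathfrak g$ equal to $\operatorname{Lie}G(M)$. Semisimplicity of $G(M)$ means $\mathfrak g'$ is semisimple, so it splits off: its centralizer $\mathfrak g''$ is a complementary ideal, with $\mathfrak g''\subseteq\mathfrak k$ and $[\mathfrak g'',\mathfrak p]=0$. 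Hence the Killing fields generated by $\mathfrak g''$ fix $o$ with vanishing linearization there, so they vanish identically and $\mathfrak g''\subseteq\ker d\tau$. Therefore $d\tau(\mathfrak g)=d\tau(\mathfrak g')=\mathfrak g'=\operatorname{Lie}G(M)$; since $G$ is connected, $\tau(G)$ is the connected subgroup of $\Aut(M)$ with this Lie algebra, which is exactly the (connected) displacement group $G(M)$. Taking isotropy at $o$ then gives $\tau(K)=\tau(G)_o=G(M)_o=K(M)$. Consequently $\tau(K)$-invariance coincides with $K(M)$-invariance and $\mathcal M_q=\mathcal M_q^{G(M)}$, so the two spherical conditions on $\psi$ are identical; the map $\varphi\mapsto\psi$ then furnishes the desired bijection carrying each spherical function to the one inducing the same function on $M$ (injectivity is immediate from $\varphi(g)=\psi(gK)$, and surjectivity because every spherical function of either pair yields such a $\psi$).

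The main obstacle is the structural identity $\tau(G)=G(M)$: isolating the transvection ideal $\mathfrak g'$, using semisimplicity to split off the complementary ideal $\mathfrak g''$ sitting inside $\mathfrak k$, and verifying it lies in $\ker d\tau$. Everything else---the descent to $M$ and the mean-value reformulation---is formal, and the normalization $\psi(o)=1$ is automatic since taking $x=y=e$ forces $\varphi(e)=\varphi(e)^2$ with $\varphi(e)\neq0$.
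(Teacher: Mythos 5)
Your proposal is correct in substance, but note that the paper itself gives no proof of this lemma: it simply imports it from \cite[Lemma 2.9]{dumas2023regularity}, so there is no in-text argument to compare against. Your route --- descend $\varphi$ to a $\tau(K)$-invariant function $\psi$ on $M$, recast the functional equation as a mean-value identity $(\mathcal{M}_q\psi)(p)=\psi(p)\psi(q)$ over isotropy orbits, and then show the two presentations of $M$ give the same data by proving the structural identities $\tau(G)=G(M)$ and $\tau(K)=K(M)$ --- is the natural one, and it works. Two steps deserve an explicit line of justification. First, the assertion that semisimplicity of $G(M)$ makes $\mathfrak{g}'=\mathfrak{p}\oplus[\mathfrak{p},\mathfrak{p}]$ semisimple tacitly identifies $\mathfrak{g}'$ with $\operatorname{Lie}G(M)$, i.e.\ assumes $d\tau$ is injective on $\mathfrak{g}'$. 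This is true but not free: the kernel $\mathfrak{n}=\mathfrak{g}'\cap\ker d\tau=\mathfrak{z}_{\mathfrak{k}}(\mathfrak{p})\cap[\mathfrak{p},\mathfrak{p}]$ kills $\mathfrak{p}$, hence by the Jacobi identity is central in $\mathfrak{g}'$, so $\mathfrak{g}'$ is a central extension of the semisimple algebra $\mathfrak{g}'/\mathfrak{n}\simeq\operatorname{Lie}G(M)$; by Whitehead's lemma the extension splits, and since $\mathfrak{n}\subseteq[\mathfrak{p},\mathfrak{p}]\subseteq[\mathfrak{g}',\mathfrak{g}']$ the splitting forces $\mathfrak{n}=0$. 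Second, you invoke twice the rigidity of Killing fields (a Killing field on a connected Riemannian manifold that vanishes at a point together with its linearization there vanishes identically); this is standard but should be cited, as it is exactly what makes $\mathfrak{g}''\subseteq\ker d\tau$ and hence $d\tau(\mathfrak{g})=\operatorname{Lie}G(M)$. With these two points filled in, $\tau(G)$ and $G(M)$ are connected subgroups of $\Aut(M)$ with the same Lie algebra, hence equal, and $\tau(K)=\tau(G)_o=K(M)$; the remaining steps (well-definedness of $\mathcal{M}_q$ under $x\mapsto xk'$, the normalization $\psi(o)=1$, bijectivity of the descent) are routine and correctly handled, and the argument applies to all spherical functions, not only the positive-definite ones, as the statement requires.
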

This lemma essentially means that when $M$ is semisimple, the behavior of spherical functions (and thus, the regularity of $K$-bi-invariant matrix coefficients of $G$) does not depend on the choice of a symmetric pair representing $M$.

When $M$ is of compact or non-compact type, then $M$ is semisimple. The associated pairs were studied in \cite{dumas2023regularity,dumas2024regularity}. On the other hand, for a Euclidean symmetric space, there is a variety of pairs representing the space which exhibit different behaviors.

Let $n\in \N$, and consider $M=\R^n$. Then $G(M)=\R^n$ and $\Aut(M)=\R^n \rtimes GL_n(\R)$. Let $K$ be a compact subgroup of $GL_n(\R)$ and $H=\R^n \rtimes K$, then $(H,K)$ is a symmetric Gelfand pair (with the involution $\sigma(x,g)=(-x,g)$ for any $(x,g)\in H$). Clearly, when $K$ is the trivial group, there exist coefficients of unitary representations of $\R^n$ that are continuous, but not Hölder continuous (for any exponent). However, it is interesting to ask what happens when $K$ is non-trivial. $\lambda$

\subsection{Cartan motion groups}\label{sec:cartan}
Let $G$ be a connected real semisimple Lie group with finite center and $\mathfrak{g}$ its Lie algebra. Let $\theta$ be a Cartan involution of $\mathfrak{g}$ and $\mathfrak{g}=\mathfrak{k}\oplus \mathfrak{p}$ be the decomposition of $\mathfrak{g}$ in $\pm 1$-eigenspaces of $\theta$. Then $K=\exp \mathfrak{k}$ is a maximal compact subgroup of $G$. The subspace $\mathfrak{p}$ is stable by $\Ad(k)$ for each $k\in K$, thus we can define the semi-direct product $H=\mathfrak{p}\rtimes K$, called the Cartan motion group associated to $G$. As explained in Section \ref{sec:sym}, $(H,K)$ is a symmetric Gelfand pair and $H/K$ is isomorphic to the Euclidean symmetric space $\mathfrak{p}$.  

Consider $\mathfrak{a}$ a maximal abelian subspace of $\mathfrak{p}$. Let $\ell=\rank G=\dim \mathfrak{a}$. For $\alpha\in \mathfrak{a}^*$, define $\mathfrak{g}_\alpha=\{X\in \mathfrak{g}\vert \forall H\in \mathfrak{a}, [H,X]=\alpha(H)X\}$ the root space associated to $\alpha$. Let $m(\alpha)=\dim(\mathfrak{g}_\alpha)$ and $\Sigma=\{\alpha\neq 0 \vert m(\alpha)\geq 1\}$ be the set of roots. We say that $\Sigma$ is the restricted root system of $G$. Let $\mathfrak{m}=\mathfrak{g}_0\cap \mathfrak{k}$. Then the Lie algebra of $G$ decomposes as $$\mathfrak{g}=\mathfrak{m}\oplus \mathfrak{a}\oplus \bigoplus_{\alpha\in \Sigma} \mathfrak{g}_\alpha.$$

The Killing form of $\mathfrak{g}$ induces an inner product on $\mathfrak{a}$, denoted $\langle \cdot,\cdot\rangle$. Then for $\lambda\in \mathfrak{a}^*$, there is a unique $H_\lambda\in \mathfrak{a}$ such that for any $H\in \mathfrak{a}$, $\lambda(H)=\langle H_\lambda,H\rangle$. We use the isomorphism $\lambda \mapsto H_\lambda$ to define an inner product on $\mathfrak{a}^*$ by $$\langle \lambda,\mu\rangle=\langle H_\lambda,H_\mu\rangle.$$

Let $W$ be the Weyl group of the root system $\Sigma$, which is the subgroup of $O(\mathfrak{a}^*)$ generated by the reflections $s_\alpha:x\mapsto x-\frac{2\langle x,\alpha\rangle}{\langle \alpha,\alpha\rangle}\alpha$. The group $W$ also acts on $\mathfrak{a}$ by $wH_{\lambda}=H_{w\lambda}$ and is isomorphic to $N_K(\mathfrak{a})/Z_K(\mathfrak{a})$ (\cite[Thm. 6.57]{knapp2002lie}). By \cite[Thm. 4.3.24]{varadarajan2013lie}, this action can be extended to automorphisms of the Lie algebra $\mathfrak{g}$. In particular, we get that $\mathfrak{g}^{w\alpha}=w(\mathfrak{g}^\alpha)$ and so $m(w\alpha)=m(\alpha)$. The hyperplanes $\{\alpha(H)=0\}$ divide $\mathfrak{a}$ into $\vert W\vert$ connected components. We choose one, which we denote $\mathfrak{a}^+$ and call the positive Weyl chamber, and we define the positive roots $\Sigma^+=\{\alpha \in \Sigma \vert \forall H\in \mathfrak{a}^+,\alpha(H)>0\}$. Then $\Sigma=\Sigma^+ \cup (-\Sigma^+)$. We say that $\alpha\in \Sigma^+$ is simple if it cannot be decomposed as $\alpha=\beta+\gamma$ with $\beta,\gamma\in \Sigma^+$. Let $\Delta$ be the set of simple roots. Then $\Delta$ is a basis of $\mathfrak{a}^*$ and we can write $\Delta=\{\alpha_1,\cdots,\alpha_\ell\}$. Given $\alpha\in \Sigma^+$, $\alpha=\sum_{i=1}^\ell n_i(\alpha)\alpha$ with $n_i(\alpha)\in \N$. Furthermore, the group $W$ is generated by the reflections $\{s_\alpha\}_{\alpha\in \Delta}$ (\cite[Ch. VI, Thm. 2]{bourbaki2007groupes}). For any $\alpha\in \Delta$, the reflection $s_\alpha$ permutes the positive roots that are not proportional to $\alpha$ (\cite[Ch. VI, Prop. 17]{bourbaki2007groupes}).\smallskip

For $\lambda\in \mathfrak{a}^*$, define \begin{equation}\label{eq:nlambda}n(\lambda)=\sum_{\begin{subarray}{c}
    \alpha\in \Sigma^+\\
    \langle \alpha,\lambda\rangle\neq 0
\end{subarray}} m(\alpha)\end{equation}and set \begin{equation}
\label{eq:kappa}\kappa(G)=\underset{\lambda \in  \mathfrak{a}^*\setminus \{0\}}{\inf} \frac{n(\lambda)}{2}=\frac{1}{2}\underset{1\leq i \leq \ell}{\min}\underset{\begin{subarray}{c}
    \alpha\in \Sigma^+\\n_i(\alpha)\geq 1
\end{subarray}}{\sum} m(\alpha).\end{equation}

The values of $\kappa$ can be found in \cite[Appendix A]{dumas2024regularity}. It is also proven in \cite{dumas2024regularity} that $\kappa(G)$ is the optimal regularity of $K$-finite matrix coefficients of $G$.\smallskip

\begin{prop}[\cite{GindikinS.G.1967Urog}]\label{prop:spherfunctions}
    The positive-definite spherical functions of $(H,K)$ are $$\varphi_\lambda:X\mapsto \int_K e^{i\lambda(\Ad(k)X)}dk$$for $\lambda\in \mathfrak{a}^*$.
\end{prop}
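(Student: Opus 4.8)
The plan is to realize each $\varphi_\lambda$ as the spherical matrix coefficient of an irreducible unitary representation of $H$ produced by Mackey's theory of induced representations, and then to argue that this construction exhausts the positive-definite spherical functions. Throughout I identify $\mathfrak{a}^*$ with a subspace of $\mathfrak{p}^*$ by extending $\lambda\in\mathfrak{a}^*$ to $\mathfrak{p}$ via the Killing form, $\lambda(Y)=\langle H_\lambda,Y\rangle$ for $Y\in\mathfrak{p}$, so that $\lambda(\Ad(k)X)$ is meaningful; this also makes the $\Ad$-invariance of the inner product available.

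First I would record directly that $\varphi_\lambda$ is a spherical function. Writing elements of $H$ as $(X,k)$ with product $(X,k)(Y,l)=(X+\Ad(k)Y,kl)$, a $K$-bi-invariant function on $H$ is the same thing as an $\Ad(K)$-invariant function of $X\in\mathfrak{p}$, and $\varphi_\lambda$ is manifestly continuous and of this form. For the functional equation $\int_K\varphi_\lambda(x(0,m)y)\,dm=\varphi_\lambda(x)\varphi_\lambda(y)$ I expand $x(0,m)y$, use left-invariance of Haar measure on $K$ to remove the dependence on the $K$-parts of $x,y$, and apply Fubini; the computation collapses to $\int_K e^{i\lambda(\Ad(k)X)}\bigl(\int_K e^{i\lambda(\Ad(km)Y)}\,dm\bigr)\,dk=\varphi_\lambda(X)\varphi_\lambda(Y)$ after a change of variable in the inner integral. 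Together with $\varphi_\lambda(0)=1$ this shows $\varphi_\lambda$ is a nonzero, continuous, $K$-bi-invariant spherical function.

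To see that $\varphi_\lambda$ is positive-definite and that these are all of them, I would use Mackey's machine for the semidirect product $H=\mathfrak{p}\rtimes K$ with $\mathfrak{p}$ abelian and $K$ compact. On the coadjoint orbit $\mathcal{O}_\lambda=K\cdot\lambda\subset\mathfrak{p}^*$, equipped with its unique $K$-invariant probability measure $\mu$ (the image of Haar measure under $k\mapsto k\cdot\lambda$), define $\pi_\lambda$ on $L^2(\mathcal{O}_\lambda,\mu)$ by $(\pi_\lambda(X,k)f)(\eta)=e^{i\eta(X)}f(k^{-1}\cdot\eta)$. A short check using $(k^{-1}\cdot\eta)(Y)=\eta(\Ad(k)Y)$ shows $\pi_\lambda$ is a homomorphism, and it is unitary since $\mu$ is $K$-invariant and the phase is unimodular; it is the representation induced from the character $X\mapsto e^{i\lambda(X)}$ of $\mathfrak{p}\rtimes K_\lambda$ (trivial on the stabilizer $K_\lambda$), hence irreducible by Mackey's irreducibility criterion. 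The constant function $\mathbf{1}$ is a $K$-fixed unit vector, and $\langle\pi_\lambda(X,k)\mathbf{1},\mathbf{1}\rangle=\int_{\mathcal{O}_\lambda}e^{i\eta(X)}\,d\mu(\eta)=\int_K e^{i\lambda(\Ad(k')X)}\,dk'=\varphi_\lambda(X)$; by the Proposition linking spherical functions and representations this exhibits $\varphi_\lambda$ as a positive-definite spherical function.

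For completeness, every irreducible unitary representation of $H$ is, by Mackey, of the form $\pi_{\lambda,\tau}$ induced from $e^{i\lambda(\cdot)}\otimes\tau$ on $\mathfrak{p}\rtimes K_\lambda$ for an irreducible $\tau$ of $K_\lambda$; by Frobenius reciprocity its restriction to $K$ is $\mathrm{Ind}_{K_\lambda}^{K}\tau$, which admits a $K$-fixed vector precisely when $\tau$ is trivial, giving back $\pi_\lambda$. Finally, since every $\Ad(K)$-orbit in $\mathfrak{p}$ meets $\mathfrak{a}$ — the standard fact that $\mathfrak{p}=\Ad(K)\mathfrak{a}$ — every $K$-orbit in $\mathfrak{p}^*$ meets $\mathfrak{a}^*$, so $\lambda$ may always be chosen in $\mathfrak{a}^*$; this identifies the positive-definite spherical functions with $\{\varphi_\lambda:\lambda\in\mathfrak{a}^*\}$, parametrized redundantly up to the Weyl group since $\varphi_\lambda=\varphi_{w\lambda}$. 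The main obstacle is the careful invocation of Mackey theory — establishing irreducibility and exhaustiveness of the induced representations together with the $K$-fixed-vector count via Frobenius reciprocity — rather than the elementary functional-equation verification.
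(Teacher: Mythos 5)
Your argument is correct, and it is essentially the proof the paper itself relies on: the proposition is quoted from Gindikin--Karpelevich-type results, but the paper's closing remark sketches exactly your Mackey-machine derivation (induction from $\chi_\lambda\otimes\tau$ on $\mathfrak{p}\rtimes K_\lambda$, irreducibility and exhaustion, the $K$-fixed vector forcing $\tau$ trivial, and the constant function giving the oscillatory-integral coefficient) in the more general setting $V\rtimes K$. The only points worth being explicit about, and which you do handle, are the regularity of the $K$-action on $\hat{\mathfrak{p}}$ needed for Mackey's criterion and the fact that every $\Ad(K)$-orbit in $\mathfrak{p}$ meets $\mathfrak{a}$, which reduces the parameter set to $\mathfrak{a}^*$ up to the Weyl group.
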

The Killing form induces a scalar product $\langle,\rangle$ on $\mathfrak{p}$ and its subspace $\mathfrak{a}$. Then for $\lambda\in \mathfrak{a}^*$, there exists $H_\lambda\in \mathfrak{a}$ such that $\lambda(X)=\langle X,H_\lambda\rangle$. Furthermore, for any $k\in K$, we have $$\langle \Ad(k)X,Y\rangle=\langle X,\Ad(k^{-1})Y\rangle.$$Since $K$ is compact, we may use the change of variable $k\leftrightarrow k^{-1}$ to obtain the following expression for the spherical functions:

\begin{equation}
    \label{eq:sphversionDKV} \varphi_\lambda(X) = \int_K e^{i\langle X,\Ad(k)H_\lambda\rangle }dk
\end{equation}

\begin{remark}
    When $G=SO_0(n,1)$, $K=SO(n)$ acts on $\mathfrak{p}\simeq \R^n$ by its usual action. Thus, the Cartan motion group associated to $G$ is the Euclidean motion group. In this case, the spherical functions are Bessel functions (\cite[Ch. XI]{vilenkin1968special}).
    
    The integral formula in Proposition \ref{prop:spherfunctions} is a generalization of the integral formula for Bessel functions.
\end{remark}

\subsection{KAK decomposition}\label{sec:kak}
In this section, we prove the existence of a well-behaved $KAK$ decomposition for Cartan motion groups, in order to apply results from Section \ref{sec:kfinite}.

The following proposition is a consequence of well-known results in the theory of semisimple Lie groups, which are essentially the ingredients used for the $KAK$ decomposition at the level of the semisimple group $G$ itself.
\begin{prop}\label{prop:kakbase}
    Let $g\in H=\mathfrak{p}\rtimes K$. Then there exists a unique $a\in \overline{\mathfrak{a}^+}$ and such that $g\in K(a,\Id)K$. Furthermore, if $k_1,k_2$ are such that $$g=(0,k_1)(a,\Id)(0,k_2^{-1})$$and if $a\in \mathfrak{a}^+$ then $k_1$ is unique up to multiplication on the right by an element of $M=Z_K(\mathfrak{a})$.
\end{prop}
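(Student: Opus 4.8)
The plan is to reduce the statement for the Cartan motion group $H=\mathfrak{p}\rtimes K$ to the classical $KAK$ decomposition of the semisimple group $G$, exploiting the fact that the adjoint action of $K$ on $\mathfrak{p}$ is exactly the restriction to $\mathfrak{p}$ of the conjugation action used in the $G$-level decomposition. Writing a general element of $H$ as $g=(X,k)$ with $X\in\mathfrak{p}$ and $k\in K$, I would first observe that $g\in K(a,\Id)K$ — meaning $g=(0,k_1)(a,\Id)(0,k_2^{-1})$ for some $k_1,k_2\in K$ and $a\in\overline{\mathfrak{a}^+}$ — translates into a condition purely on the $\mathfrak{p}$-component. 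Computing the semidirect product: $(0,k_1)(a,\Id)(0,k_2^{-1})=(\Ad(k_1)a,\, k_1k_2^{-1})$, so the decomposition holds precisely when the $K$-component satisfies $k=k_1k_2^{-1}$ (which imposes no constraint, as $K$ acts simply transitively on itself) and when $X=\Ad(k_1)a$ for some $k_1\in K$ and $a\in\overline{\mathfrak{a}^+}$.

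Thus the entire statement collapses to the well-known description of the $K$-orbits in $\mathfrak{p}$: every element of $\mathfrak{p}$ is $\Ad(K)$-conjugate to a unique element of the closed positive Weyl chamber $\overline{\mathfrak{a}^+}$. For existence of $a$, I would cite the standard fact (e.g.\ \cite[Thm.\ 6.57]{knapp2002lie} or the theory already invoked in Section \ref{sec:cartan}) that $\mathfrak{a}$ is maximal abelian in $\mathfrak{p}$ and $\bigcup_{k\in K}\Ad(k)\mathfrak{a}=\mathfrak{p}$, so $X\in\Ad(K)\mathfrak{a}$; intersecting the $\Ad(K)$-orbit of $X$ with $\mathfrak{a}$ gives a single $W$-orbit, and choosing the chamber representative pins down a unique $a\in\overline{\mathfrak{a}^+}$. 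For uniqueness of $a$, I would use that the $W$-action on $\mathfrak{a}$ has $\overline{\mathfrak{a}^+}$ as a strict fundamental domain, so two chamber elements in the same $W$-orbit coincide. This handles the first sentence of the proposition.

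For the uniqueness of $k_1$ up to right multiplication by $M=Z_K(\mathfrak{a})$, I would argue as follows. Given $X=\Ad(k_1)a=\Ad(k_1')a$ with $a\in\mathfrak{a}^+$ regular, I need $k_1^{-1}k_1'\in\Stab$ of the situation. From $\Ad(k_1^{-1}k_1')a=a$, set $m=k_1^{-1}k_1'$, so $\Ad(m)$ fixes $a$. The key point is that for $a$ in the open chamber, the stabilizer of $a$ in $K$ under $\Ad$ equals $Z_K(\mathfrak{a})=M$: indeed $\Ad(m)$ fixes the regular element $a$, hence fixes the unique maximal abelian subspace of $\mathfrak{p}$ containing $a$, which is $\mathfrak{a}$ itself (regularity of $a$ forces $\mathfrak{a}=\{Y\in\mathfrak{p}\mid [Y,a]=0\}\cap\ker$-type centralizer), so $\Ad(m)$ centralizes all of $\mathfrak{a}$ and $m\in M$. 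This identification of $\Stab_K(a)$ with $M$ for regular $a$ is the crux, and it is exactly what the theory of restricted roots delivers: regularity means $\alpha(a)\neq 0$ for all $\alpha\in\Sigma$, so the centralizer $\mathfrak{z}_{\mathfrak{g}}(a)=\mathfrak{m}\oplus\mathfrak{a}$ and intersecting with $\mathfrak{k}$ gives $\mathfrak{m}$, whose corresponding subgroup stabilizer is $M$ (using that $K$ is compact so the centralizer is connected modulo its component issues, or working directly at the level of $\Ad$).

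I expect the main obstacle to be the precise identification of $\Stab_K(a)$ with $M=Z_K(\mathfrak{a})$ rather than merely with $Z_K(a)$, since a priori an element fixing the single regular vector $a$ need only centralize the line $\R a$; the content is that regularity upgrades this to centralizing all of $\mathfrak{a}$, and one must be careful that the compact-group stabilizer is exactly $M$ and not a larger finite extension. I would resolve this by differentiating: $\Ad(m)a=a$ with $a$ regular implies $\Ad(m)$ preserves the centralizer $\mathfrak{z}_{\mathfrak{p}}(a)=\mathfrak{a}$, and since $\Ad(m)$ is an orthogonal (with respect to the Killing form) automorphism fixing $a$ and preserving $\mathfrak{a}$, a root-system argument — $\Ad(m)$ induces an element of $W$ fixing the regular $a$, hence the identity of $W$ — shows $\Ad(m)$ acts trivially on $\mathfrak{a}$, i.e.\ $m\in Z_K(\mathfrak{a})=M$. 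The converse inclusion $M\subset\Stab_K(a)$ is immediate since $M$ centralizes $\mathfrak{a}\ni a$. Everything else is the routine bookkeeping of the semidirect-product multiplication, which I would keep to a single line.
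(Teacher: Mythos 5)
Your proposal is correct and follows essentially the same route as the paper: reduce via the semidirect-product multiplication $(0,k_1)(a,\Id)(0,k_2^{-1})=(\Ad(k_1)a,k_1k_2^{-1})$ to the statement that $\overline{\mathfrak{a}^+}$ is a strict fundamental domain for the $\Ad(K)$-action on $\mathfrak{p}$ (conjugacy of maximal abelian subspaces for existence, the fact that $K$-conjugacy of elements of $\mathfrak{a}$ implies $W$-conjugacy for uniqueness), and then identify the stabilizer of a regular $a$ with $M=Z_K(\mathfrak{a})$. The only difference is cosmetic: you spell out the argument that $Z_K(a)=Z_K(\mathfrak{a})$ for regular $a$ via the trivial stabilizer of a regular element in $W$, which the paper states without proof.
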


\begin{proof}
    Let $g=(x,k)$. Since $(0,k_1)(a,\Id)(0,k_2^{-1})=(\Ad(k_1)(a),k_1k_2^{-1})$, it suffices to show that for any $x\in \mathfrak{p}$, there exists a unique $a\in  \overline{\mathfrak{a}^+}$ such that $x\in \Ad(K)a$.

    \textit{Existence.} Since all maximal abelian subspaces of $\mathfrak{p}$ are conjugated under $K$, we have $\mathfrak{p}=\bigcup \Ad(k)\mathfrak{a}$ (\cite[Thm. 6.51]{knapp2002lie}). Thus, there exist $a_0\in \mathfrak{a}$ and $k_0\in K$ such that $x=\Ad(k_0)a_0$. Then by the theory of root systems, there exists $a\in \overline{\mathfrak{a}^+}$ the closure of the positive Weyl chamber and $w\in W$ such that $w.a=a_0$ (\cite[Prop. 8.29]{hall2003lie}). Since $W=N_K(\mathfrak{a})/Z_K(\mathfrak{a})$, $w$ is represented by $k_w\in K$ and $x=\Ad(k_0k_w)a$.

    \textit{Uniqueness.} Assume that $x=\Ad(k)a=\Ad(k')a'$ for $k,k'\in K$ and $a,a'\in \overline{\mathfrak{a}^+}$. Then $a,a'$ are conjugated under $K$ thus by \cite[Lemma 7.38]{knapp2002lie}, $a,a'$ are conjugated under $N_K(\mathfrak{a})$. So there is $w\in W$ such that $w.a=a'$ hence by \cite[Prop. 8.25]{hall2003lie}, $a=a'$.\smallskip

    Finally, the element $k$ such that $x=\Ad(k)a$ is unique up to multiplication on the right by an element of $Z_K(a)$. When $a\in \mathfrak{a}^+$, $Z_K(a)=M=Z_K(\mathfrak{a})$.
\end{proof}

By Proposition \ref{prop:kakbase}, we may define $P:H\to \overline{\mathfrak{a}^+}$ such that $P(g)$ is the only element of $\mathfrak{a}^+$ such that $g\in K(P(g),\Id)K$. Set $$H_r=\{g\in H \vert \ P(g)\in \mathfrak{a}^+\},$$this is a dense open subset of $H$ that we call the set of regular points of $H$. We now want to study the regularity of the decomposition.

\begin{lem}
    \label{lem:submersion}The map $$\fonction{q}{K\times K\times \mathfrak{a}^+}{H_r}{(k_1,k_2,a)}{(\Ad(k_1)a,k_1k_2^{-1})}$$is a submersion.
\end{lem}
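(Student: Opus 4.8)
The plan is to prove that $q$ is a submersion by checking that its differential is surjective at every point $(k_1,k_2,a)$; smoothness and the fact that the image lands in $H_r$ are immediate from Proposition \ref{prop:kakbase}. I would identify the tangent space of $H=\mathfrak{p}\rtimes K$ at any point with $\mathfrak{p}\times\mathfrak{k}$, using the canonical identification $T_x\mathfrak{p}\simeq\mathfrak{p}$ on the vector-space factor and left translation on the $K$-factor. Tangent vectors to the source at $(k_1,k_2,a)$ are then encoded by triples $(X_1,X_2,Y)\in\mathfrak{k}\times\mathfrak{k}\times\mathfrak{a}$ via the curves $t\mapsto(k_1\exp(tX_1),k_2\exp(tX_2),a+tY)$.

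First I would compute $dq$ explicitly. Differentiating the $\mathfrak{p}$-component $\Ad(k_1\exp(tX_1))(a+tY)$ at $t=0$ gives $\Ad(k_1)\bigl(Y+[X_1,a]\bigr)=\Ad(k_1)\bigl(Y-\ad(a)X_1\bigr)$, while differentiating the $K$-component $k_1\exp(tX_1)\exp(-tX_2)k_2^{-1}$ and reading it off in the left trivialization at $k_1k_2^{-1}$ gives $\Ad(k_2)(X_1-X_2)$. Thus
\[
dq_{(k_1,k_2,a)}(X_1,X_2,Y)=\bigl(\Ad(k_1)(Y-\ad(a)X_1),\ \Ad(k_2)(X_1-X_2)\bigr).
\]
Since $\Ad(k_1)$ and $\Ad(k_2)$ are invertible, surjectivity of $dq$ onto $\mathfrak{p}\times\mathfrak{k}$ is equivalent to solving, for arbitrary $(v',Z')\in\mathfrak{p}\times\mathfrak{k}$, the system $Y-\ad(a)X_1=v'$ and $X_1-X_2=Z'$. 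The second equation is solvable for $X_2$ once $X_1$ is chosen, so the whole question reduces to surjectivity of $(Y,X_1)\mapsto Y-\ad(a)X_1$ from $\mathfrak{a}\times\mathfrak{k}$ onto $\mathfrak{p}$, i.e. to the identity $\mathfrak{p}=\mathfrak{a}+\ad(a)\mathfrak{k}$.

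The hard part — and really the only substantive point — is establishing $\mathfrak{p}=\mathfrak{a}+\ad(a)\mathfrak{k}$ for $a\in\mathfrak{a}^+$. Here I would use the root-space decomposition refined by the Cartan involution $\theta$: writing $\mathfrak{k}_\alpha=\mathfrak{k}\cap(\mathfrak{g}_\alpha\oplus\mathfrak{g}_{-\alpha})$ and $\mathfrak{p}_\alpha=\mathfrak{p}\cap(\mathfrak{g}_\alpha\oplus\mathfrak{g}_{-\alpha})$, one has $\mathfrak{k}=\mathfrak{m}\oplus\bigoplus_{\alpha\in\Sigma^+}\mathfrak{k}_\alpha$ and $\mathfrak{p}=\mathfrak{a}\oplus\bigoplus_{\alpha\in\Sigma^+}\mathfrak{p}_\alpha$, with $\dim\mathfrak{k}_\alpha=\dim\mathfrak{p}_\alpha=m(\alpha)$. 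For a root vector $X\in\mathfrak{g}_\alpha$ one checks $X+\theta X\in\mathfrak{k}_\alpha$, $X-\theta X\in\mathfrak{p}_\alpha$, and $\ad(a)(X+\theta X)=\alpha(a)(X-\theta X)$. Since $a\in\mathfrak{a}^+$ is regular, $\alpha(a)\neq 0$ for every $\alpha\in\Sigma^+$, so $\ad(a)$ maps $\mathfrak{k}_\alpha$ isomorphically onto $\mathfrak{p}_\alpha$; as $\ad(a)$ annihilates $\mathfrak{m}$, we obtain $\ad(a)\mathfrak{k}=\bigoplus_{\alpha\in\Sigma^+}\mathfrak{p}_\alpha$ and hence $\mathfrak{a}+\ad(a)\mathfrak{k}=\mathfrak{p}$. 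This yields surjectivity of $dq$ and proves that $q$ is a submersion.

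As a consistency check, the kernel of $dq$ consists of the triples $(X,X,0)$ with $X\in\ker(\ad(a)|_{\mathfrak{k}})=\mathfrak{m}$, so the fibers of $q$ have dimension $\dim\mathfrak{m}$, in agreement with the $M$-ambiguity of the decomposition recorded in Proposition \ref{prop:kakbase}.
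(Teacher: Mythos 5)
Your proof is correct and follows essentially the same route as the paper: identify tangent spaces via left translation, compute $dq$ explicitly, reduce surjectivity to the map $(Y,X)\mapsto Y+[X,a]$ from $\mathfrak{a}\times\mathfrak{k}$ onto $\mathfrak{p}$, and conclude using the root-space decomposition $\mathfrak{p}=\mathfrak{a}\oplus\bigoplus_{\alpha\in\Sigma^+}\mathfrak{p}_\alpha$ together with the regularity $\alpha(a)\neq 0$ for $a\in\mathfrak{a}^+$. The only additions are the (correct) kernel computation matching the $M$-ambiguity, which the paper omits.
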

\begin{proof}If $k\in K$, denote $L_k$ and $R_k$ the translations by $k$ on the left and right respectively. Let $m:G\times K\to K$ be the multiplication map, its differential at $(a,b)$ is $$\fonction{T_{(a,b)}m}{T_aK\times T_bK}{T_{ab}K}{(X_a,X_b)}{T_aR_b(X_a)+T_bL_a(X_b)}.$$We can identify $T_kK$ with $\mathfrak{k}$ by the isomorphism $T_eL_k$. Under this identification, we have $\forall g,h\in K$, $T_hL_g=\Id$ and $T_hR_g=\Ad(g^{-1})$, so that the tangent map becomes $T_{(a,b)}m(X_a,X_b)=\Ad(b^{-1})(X_a)+X_b$. Thus by the chain rule we have $$\fonction{T_{(k_1,k_2,a)}q}{\mathfrak{k}\times \mathfrak{k}\times \mathfrak{a}}{\mathfrak{p}\times \mathfrak{k}}{(X_1,X_2,Y)}{(\Ad(k_1)(Y+\ad(X_1)a),\Ad(k_2)(X_1)-X_2)}.$$We know that $\Ad(k)$ is an isomorphism of $\mathfrak{p}$. Thus, the map $T_{(k_1,k_2,H)}q$ is surjective if and only if $$\fonction{u}{\mathfrak{k}\times \mathfrak{a}}{\mathfrak{p}}{(X,Y)}{Y+[X,a]}$$ is surjective.
For $\alpha\in \Sigma^+$, let $\mathfrak{k}^\alpha=\mathfrak{k}\cap (\mathfrak{g}^\alpha \oplus \mathfrak{g}^{-\alpha})$ and $\mathfrak{p}^\alpha=\mathfrak{p}\cap (\mathfrak{g}^\alpha \oplus \mathfrak{g}^{-\alpha})$. From \cite[Ch. VI, Prop. 1.4]{loos1969symmetric2}, we get $$\mathfrak{k}=\mathfrak{m}\oplus \bigoplus_{\alpha\in \Sigma^+} \mathfrak{k}^\alpha=\mathfrak{m}\oplus \mathfrak{l},$$ $$\mathfrak{p}=\mathfrak{a}\oplus \bigoplus_{\alpha\in \Sigma^+} \mathfrak{p}^\alpha=\mathfrak{a}\oplus \mathfrak{b}.$$We also get that for $\alpha\in \Sigma^+$, there exists $Z_{\alpha,1},\cdots,Z_{\alpha,m(\alpha)}$ basis of $\mathfrak{g^\alpha}$, such that setting $Z_{\alpha,i}^{+}=Z_{\alpha,i}+\theta(Z_{\alpha,i})$ and $Z_{\alpha,i}^-=Z_{\alpha,i}-\theta(Z_{\alpha,i})$, $\{Z_{\alpha,i}^{+}\}$ is a basis of $\mathfrak{k}_\alpha$ and $\{Z_{\alpha,i}^{-}\}$ is a basis of $\mathfrak{p}_\alpha$.\\
Clearly, $\mathfrak{a}\subset \Ima u$. Furthermore, for $\alpha\in \Sigma^+,1\leq i\leq m(\alpha)$,  \begin{align*}
    u(Z_{\alpha,i}^+,0)&=[Z_{\alpha,i}^+,a]\\
    &=[Z_{\alpha,i},a]+[\theta Z_{\alpha,i},a]\\
    &=[Z_{\alpha,i},a]+\theta [Z_{\alpha,i},-a]\\
    &=-\alpha(a)Z_{\alpha,i}+\alpha(a)\theta Z_{\alpha,i}\\
    &=-\alpha(a)Z_{\alpha,i}^-.
\end{align*}
Since $a\in \mathfrak{a}^+$, $\alpha(a)\neq 0$ for all $\alpha\in \Sigma^+$ thus $Z_{\alpha,i}\in \Ima u$ and $u$ is surjective.
\end{proof}

\begin{prop}\label{prop:KAKversionlisse}The map $P:H_r\to \mathfrak{a}^+$ is smooth on $H_r$. Furthermore, for each $g\in H_r$, there exists a neighborhood $U_g$ of $g$ in $H_r$ and a choice of $g\mapsto k_i(g)$ such that $k_i$ is smooth on $U_g$, $i=1,2$ and for any $g\in U_g$, $$g=(0,k_1(g))(P(g),\Id)(0,k_2(g)^{-1}).$$
\end{prop}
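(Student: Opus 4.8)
The plan is to deduce smoothness of $P$ and of the local factors $k_1,k_2$ from Lemma \ref{lem:submersion} via the implicit function theorem, exploiting that the stabilizer of the submersion $q$ is exactly $\Delta(M)=\{(m,m)\mid m\in M\}$. First I would note that $q:K\times K\times \mathfrak{a}^+\to H_r$ is a surjective submersion by Lemma \ref{lem:submersion}, and that by Proposition \ref{prop:kakbase} its fibers are precisely the orbits of the right $\Delta(M)$-action $(k_1,k_2,a)\cdot m=(k_1m,k_2m,a)$ on $K\times K\times \mathfrak{a}^+$ (one checks $q(k_1m,k_2m,a)=(\Ad(k_1m)a,k_1mm^{-1}k_2^{-1})=(\Ad(k_1)a,k_1k_2^{-1})$, using that $m\in M=Z_K(\mathfrak{a})$ centralizes $a$). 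Since $M$ is a compact subgroup acting freely and smoothly, the quotient $(K\times K\times \mathfrak{a}^+)/\Delta(M)$ is a smooth manifold, and $q$ descends to a smooth bijection $\bar q$ between this quotient and $H_r$ which is again a submersion, hence a diffeomorphism by invariance of dimension (both sides have dimension $2\dim K-\dim M+\ell = \dim H_r$).

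Given this, the map $g\mapsto (k_1(g),k_2(g),P(g))\,\Delta(M)$ is smooth as $\bar q^{-1}$, so in particular $P=\mathrm{pr}_3\circ\bar q^{-1}$ is globally smooth on $H_r$ (the projection to $\mathfrak{a}^+$ is $\Delta(M)$-invariant, hence well defined on the quotient), which gives the first assertion. For the local smooth factors $k_1,k_2$, I would pick $g_0\in H_r$ with a preimage $(k_1^0,k_2^0,a_0)$, and use that $q$ being a submersion admits, by the local submersion theorem, a smooth local section $\sigma:U_{g_0}\to K\times K\times \mathfrak{a}^+$ with $q\circ\sigma=\mathrm{id}$ and $\sigma(g_0)=(k_1^0,k_2^0,a_0)$. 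Writing $\sigma=(k_1,k_2,a)$, these are smooth, and $q(\sigma(g))=g$ unpacks to exactly $g=(0,k_1(g))(P(g),\Id)(0,k_2(g)^{-1})$; smoothness of $a=P$ on this neighborhood is automatic but already follows globally from the previous step.

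The main obstacle I expect is making the quotient argument clean, specifically verifying that $\Delta(M)$ acts freely so that the quotient is a genuine smooth manifold and $\bar q$ is injective. Freeness of the right $\Delta(M)$-action on the first factor $K$ is immediate since right multiplication in $K$ is free; the subtler point is that the fibers of $q$ are \emph{exactly} these orbits and no larger, which is precisely the uniqueness statement in Proposition \ref{prop:kakbase} (the element realizing $x=\Ad(k_1)a$ with $a\in\mathfrak{a}^+$ is unique up to right multiplication by $M$, and $k_1k_2^{-1}$ is then determined). One should also confirm the dimension count so that the submersion $\bar q$ between equidimensional manifolds is a local diffeomorphism, and being bijective, a global diffeomorphism. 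Alternatively, if one wishes to avoid quotient-manifold technology entirely, the local-section route suffices for both claims: apply the local submersion theorem directly at each $g_0$ to produce smooth $(k_1,k_2,a)$, and observe that the resulting local expressions for $a(g)=P(g)$ glue to a well-defined global smooth function by uniqueness of $P(g)$ in Proposition \ref{prop:kakbase}, giving smoothness of $P$ on all of $H_r$ without invoking the quotient.
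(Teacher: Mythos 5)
Your proposal is correct and follows essentially the same route as the paper: both pass to the quotient by the diagonal $\Delta(M)$-action (the paper writes it as $(K\times K)/\Delta(M)\times\mathfrak{a}^+$, which is your quotient since $\Delta(M)$ acts trivially on $\mathfrak{a}^+$), identify the fibers of $q$ with the $\Delta(M)$-orbits via the uniqueness in Proposition \ref{prop:kakbase}, conclude that the induced bijective equidimensional submersion is a diffeomorphism, and obtain the local smooth factors $k_1,k_2$ from smooth local sections. The only cosmetic difference is that the paper takes sections of the projection $K\times K\to (K\times K)/\Delta(M)$ and composes with the smooth inverse, whereas you section $q$ directly; both are valid.
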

In particular, Propositions \ref{prop:kakbase} and \ref{prop:KAKversionlisse} imply that $(H_r,K)$ has a well-behaved $KAK$ decomposition in the sense of Definition \ref{def:smoothkak}.

\begin{proof}Let $\Delta(M)=\{(m,m)\vert m\in M\}$ denote the diagonal subgroup of $K\times K$. By Lemma \ref{prop:kakbase}, the map $$\fonction{\Tilde{q}}{(K\times K)/\Delta(M)\times \mathfrak{a}^+}{H_r}{\left((k_1,k_2) \operatorname{ mod }M,a\right)}{(\Ad(k_1)a,k_1k_2^{-1})}$$is a well-defined smooth bijection between manifolds of the same dimension.\\
Let $p:K\times K\to (K\times K)/\Delta(M)$ be the projection. It is a surjective submersion. Let $q$ be the submersion defined in Lemma \ref{lem:submersion}, we have $q=\Tilde{q}\circ (p,\Id)$. Thus, for any $(x,a)\in (K\times K)/\Delta(M)\times \mathfrak{a}^+$, we have $T_{(x,a)}\Tilde{q}$ surjective. But it is a linear map between vector spaces of the same dimension, so it is invertible. Thus, by the local inversion theorem and since $\Tilde{q}$ is bijective, $\Tilde{q}$ is a smooth diffeomorphism.

Let $(x,P):G_r\to (K\times K)/\Delta(M)\times \mathfrak{a}^+$ be a smooth inverse. We get that $P$ is a smooth map. From \cite[Proposition 4.26]{lee2003introduction}, since $p$ is a submersion, any $(k_1,k_2)\in K\times K$ is in the image of a smooth local section of $p$. Let $g\in H_r$, since $p$ is surjective, $x(g)=p(k_1,k_2)$. There exists a neighborhood $V$ of $x(g)$ and a smooth section $s=(s_1,s_2):V\mapsto K\times K$ such that $s(x(g))=(k_1,k_2)$.\\
Let $U=x^{-1}(V)$ be a neighborhood of $g$, then $k_i=s_i\circ x$ is smooth on $U$ and $g=(0,k_1(g))(P(g),\Id)(0,k_2(g)^{-1})$.
\end{proof}

\section{Boundedness of spherical functions}\label{sec:mainresult}
\begin{theorem}\label{thm:cartanreg}
    Let $G$ be a connected semisimple Lie group with finite center and $H=\mathfrak{p}\rtimes K$ its Cartan motion group. Let $r=\lfloor \kappa(G)\rfloor$ and $\delta=\kappa(G)-r$. Then the family of positive-definite spherical functions of $(H,K)$ is bounded in $C^{(r,\delta)}(H_r)$.
\end{theorem}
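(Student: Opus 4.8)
The plan is to reduce the regularity of the spherical functions on $H_r$ to a single uniform decay estimate for an oscillatory integral over $K$, and then to extract the Hölder bound by interpolating between consecutive orders of differentiation. Since $\varphi_\lambda$ is $K$-bi-invariant it factors as $\varphi_\lambda=\tilde\varphi_\lambda\circ P$, where $P\colon H_r\to\mathfrak a^+$ is the smooth map of Proposition \ref{prop:KAKversionlisse} and, by \eqref{eq:sphversionDKV},
$$\tilde\varphi_\lambda(a)=\int_K e^{i\langle a,\Ad(k)H_\lambda\rangle}\,dk,\qquad a\in\mathfrak a^+.$$
By Lemma \ref{lem:precomposition} applied to $P$, it suffices to show that $\{\tilde\varphi_\lambda\}_{\lambda\in\mathfrak a^*}$ is bounded in $C^{(r,\delta)}(\mathfrak a^+)$, i.e. that $\sup_\lambda\|\tilde\varphi_\lambda\|_{C^{(r,\delta)}(L)}<\infty$ for every compact $L\subset\mathfrak a^+$. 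Such an $L$ is bounded and bounded away from every wall $\{\alpha=0\}$ and from $0$, so each $\alpha(a)$ stays in a fixed compact subset of $\mathbb R\setminus\{0\}$ for $a\in L$.

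Differentiating under the integral, write $H_\lambda=t\eta$ with $t=|H_\lambda|\asymp|\lambda|$ and $\eta\in\mathfrak a$ a unit vector. Each derivative in $a$ brings down a factor $t$, so for a multi-index $\beta$,
$$\partial_a^\beta\tilde\varphi_\lambda(a)=(it)^{|\beta|}\int_K b_\beta(k)\,e^{it\Phi_{a,\eta}(k)}\,dk,\qquad \Phi_{a,\eta}(k)=\langle a,\Ad(k)\eta\rangle,$$
where $b_\beta$ is a product of $|\beta|$ linear forms evaluated at $\Ad(k)\eta$, hence smooth with all derivatives bounded uniformly in the unit vector $\eta$. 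The crucial input I would establish is the uniform oscillatory bound
$$\Bigl|\int_K b(k)\,e^{it\Phi_{a,\eta}(k)}\,dk\Bigr|\le C\,t^{-\kappa(G)}\qquad(t\ge 1),$$
uniform over $a\in L$, over unit $\eta\in\mathfrak a$, and over amplitudes $b$ with bounded derivatives. Granting it, $|\partial_a^\beta\tilde\varphi_\lambda(a)|\le C\,t^{|\beta|-\kappa(G)}$ for $t\ge1$, while the trivial bound $|\partial_a^\beta\tilde\varphi_\lambda|\le C\,t^{|\beta|}$ handles $t\le1$. As $|\beta|\le r\le\kappa(G)$, all derivatives up to order $r$ are uniformly bounded on $L$. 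For the Hölder seminorm of $D^r\tilde\varphi_\lambda$ I would combine $\|D^r\tilde\varphi_\lambda\|_\infty\lesssim t^{-\delta}$ and $\|D^{r+1}\tilde\varphi_\lambda\|_\infty\lesssim t^{1-\delta}$ with the elementary inequality $\min(A,B|h|)\le A^{1-\delta}(B|h|)^\delta$; the powers of $t$ cancel and yield $|D^r\tilde\varphi_\lambda(a)-D^r\tilde\varphi_\lambda(a')|\le C|a-a'|^\delta$ uniformly in $\lambda$. This is the standard interpolation between consecutive derivative orders and produces the claimed uniform $C^{(r,\delta)}$ bound.

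It remains to prove the oscillatory estimate, which is the main obstacle. The phase $\Phi_{a,\eta}$ is exactly of the type studied in \cite{DKV}: passing to the compact orbit $\mathcal O=\Ad(K)\eta\subset\mathfrak p$ the integral becomes $\int_{\mathcal O}e^{it\langle a,Y\rangle}\,d\mu(Y)$, and the critical points of $Y\mapsto\langle a,Y\rangle$ are the finite set $\mathcal O\cap\mathfrak a=W\cdot\eta$. The root-space computation of Lemma \ref{lem:submersion} shows $T_{w\eta}\mathcal O=\bigoplus_{\alpha\,:\,\alpha(w\eta)\neq0}\mathfrak p^\alpha$, of dimension $n(\lambda)$ by the $W$-invariance of $n$, and that the Hessian of the phase on each summand $\mathfrak p^\alpha$ is a non-zero multiple of $\alpha(a)\,\alpha(w\eta)$; since $a$ is regular this is non-degenerate on the whole tangent space. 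Thus each critical point is non-degenerate on $\mathcal O$, and stationary phase gives a contribution of size $t^{-n(\lambda)/2}\le t^{-\kappa(G)}$. The genuine difficulty is uniformity in $\eta$: as $\eta$ approaches a wall $\{\alpha=0\}$ the transverse Hessian eigenvalues $\propto\alpha(\eta)$ degenerate and the orbit drops dimension, so the naive stationary-phase constant $\sim\prod_\alpha(\alpha(a)\,\alpha(\eta))^{-m(\alpha)/2}$ blows up precisely as the decay exponent $n(\lambda)/2$ decreases toward its minimum $\kappa(G)$. I would resolve this with a uniform stationary phase argument, decomposing the sphere of directions $\eta$ according to which roots are small and applying a parameter-dependent stationary phase estimate on each stratum, so that the amplitude blow-up is exactly compensated by the loss in decay rate and the uniform exponent is the minimum $\kappa(G)$. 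This step mirrors the analysis of \cite{dumas2024regularity} and is the technical core of the proof.
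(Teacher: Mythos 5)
Your overall architecture coincides with the paper's: reduce to $\mathfrak a^+$ via the smooth map $P$ and Lemma \ref{lem:precomposition}, differentiate under the integral so that each derivative in $a$ costs a factor $t=\vert H_\lambda\vert$, prove a decay bound $t^{-\kappa(G)}$ for the resulting oscillatory integrals using the critical-point and Hessian analysis of \cite{DKV}, and finally interpolate between orders $r$ and $r+1$ to obtain the uniform H\"older seminorm (this last step is exactly \eqref{eq:cartanholderstep1}--\eqref{eq:cartanholderstep3}). Where you genuinely diverge is on the step you yourself identify as the technical core: uniformity of the stationary phase constant as $\eta=H_\lambda/\vert H_\lambda\vert$ approaches a wall. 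You propose to stratify the unit sphere of directions according to which roots are small and to verify that the blow-up of the leading coefficient (of order $\prod\vert\alpha(\eta)\vert^{-m(\alpha)/2}$) is compensated by the surplus decay $n(\lambda)/2-\kappa(G)$ off the wall; this can be made to work but is left as a sketch and is the hardest part of your plan. The paper sidesteps the compensation argument entirely: for each fixed $(a_0,\lambda_0)\in L\times S$ it chooses coordinates $(x,y)$ adapted to the critical manifold $\mathcal{C}_{\lambda_0}$ of the \emph{limiting} phase, applies H\"ormander's parameter-dependent stationary phase theorem \cite[Thm. 7.7.6]{hormander1983analysis} only in the $n(\lambda_0)$ transverse variables $y$, treating $a$, $\lambda$ and $x$ as parameters, and obtains the locally uniform bound $t^{-n(\lambda_0)/2}\leq t^{-\kappa(G)}$ of \eqref{eq:phasestat1}; compactness of $L\times S$ and a partition of unity on $K$ then give global uniformity in \eqref{eq:phasestat2}. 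The point is that near a singular direction one does not need the full decay $t^{-n(\lambda)/2}$, only $t^{-n(\lambda_0)/2}$, so the degenerating Hessian directions are simply absorbed into the tangential parameter variables rather than balanced against the constant. Your route, if completed, would prove a sharper direction-by-direction estimate, but at the price of a nontrivial uniform degenerate stationary phase lemma; the paper's route obtains the needed uniform bound from an off-the-shelf theorem plus compactness, which is all the statement requires.
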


\begin{proof}
    Up to composition with the smooth map $P$, it suffices to consider the group variable in $\mathfrak{a}^+$. Let $\lambda\in \mathfrak{a}^*$, $a\in \mathfrak{a}$ and $t\in \R$. Let $D$ be the differential operator with respect to the variable $a$. Denote $$f(\lambda,a,k,t)=e^{it\langle a,\Ad(k)H_\lambda\rangle}\in C^\infty(\mathfrak{a}^* \times \mathfrak{a}^+\times K\times \R).$$Then by induction on $s$, there is a polynomial $P\in  C^\infty(\mathfrak{a}^* \times \mathfrak{a}^+\times K\times \mathfrak{a}^s)[t]$ of degree $s$ such that for any $X\in \mathfrak{a}^s$, $$D^sf(\lambda,a,k,t)(X)=P(t)(\lambda,a,k,X)e^{it\langle a,\Ad(k)H_\lambda\rangle}.$$

    For $0\leq j \leq s$, let $g_j(\lambda,a,X)\in C^\infty(K)$ be defined by \begin{equation}\label{eq:defamplitude}g_j=\left.\frac{1}{j!} \frac{d^j}{dt^j}\left( (D^sf)e^{-it\langle a,\Ad(k)H_\lambda\rangle} \right)\right\vert_{t=0}.\end{equation}Then we have \begin{equation}\label{eq:diffspherique}D^s\varphi_{t\lambda}(a)(X)=\sum_{j=0}^s t^j \int_K e^{it\langle a,\Ad(k)H_\lambda\rangle}g_j(\lambda,a,X)(k)dk.\end{equation}

    We want to apply the method of stationary phase to understand the asymptotics of these integrals as $t\to \infty$. Let $f_{a,\lambda}:k\mapsto \langle a,\Ad(k)H_\lambda\rangle$ be the phase function. By \cite[Prop. 1.2]{DKV}, the critical set of $f_{a,\lambda}$ is $\mathcal{C}_\lambda=\bigcup_{w\in W} k_wK_\lambda$ where $K_\lambda=Z_K(\lambda)$ and $k_w$ is a representative of $w\in W$ (since $M=Z_K(\mathfrak{a})\subset K_\lambda$ this does not depend on the choice of $k_w$). By \cite[Prop. 1.4]{DKV}, the Hessian of $f_{a,\lambda}$ at $k_wm$, $m\in K_\lambda$, is diagonal in the basis adapted to the orthogonal decomposition of $\mathfrak{p}=\mathfrak{m}\oplus \bigoplus \mathfrak{p}_\alpha$, where $\mathfrak{p}_\alpha=\mathfrak{p}\cap \left(\mathfrak{g}_\alpha\oplus \mathfrak{g}_{-\alpha}\right)$, with eigenvalues $-\alpha(H_\lambda)(w\alpha)(a)$ in each $\mathfrak{p}_\alpha$.

    We now proceed as in \cite[Prop. 9.2]{DKV}. Let $S$ be the unit sphere in $\mathfrak{a}^*$ and $L$ a compact subset of $\mathfrak{a}^+$. Fix $\lambda_0\in S,a_0\in L$. Let $k_0\in \mathcal{C}_{\lambda_0}$, we can find a chart $$\fonction{\psi}{U_{k_0}}{U\times V\subset\R^{\dim \mathcal{C}_{\lambda_0}}\times \R^{n(\lambda_0)}}{k}{(x,y)}$$ around $k_0$ such that $\psi(k_0)=(0,0)$ and $\mathcal{C}_\lambda$ is given in these local coordinate by $\{y=0\}$. Treat $f_{a,\lambda}(k)$ as a function of $y$ with parameters $a,\lambda,x$. Then $f$ has a unique nondegenarate critical point $y=0$ at $(a,\lambda,x)=(a_0,\lambda_0,0)$. Thus, by \cite[Thm. 7.7.6]{hormander1983analysis}, there exists a neighborhood $U_{a_0}\times U_{\lambda_0}\times U_0$ of $(a_0,\lambda_0,0)$ and a continuous seminorm $\nu$ on $C^\infty(K)$, such that for any $a\in U_{a_0}$, $\lambda \in U_{\lambda_0}$, $x\in U_0$, $g\in C^\infty(K)$ and $t\geq 1$, \begin{equation}
        \label{eq:phasestat1} \left\vert \int_V e^{itf_{a,\lambda}(\psi^{-1}(x,y))}g(\psi^{-1}(x,y))dy \right\vert \leq \nu(g)t^{-n(\lambda_0)/2}\leq \nu(g)t^{-\kappa(G)}.
    \end{equation}By compactness we may cover $L\times S$ by a finite number of open subset $U_{a_0}\times U_{L_0}$ so up to changing the seminorm and $U_0$, we may assume \eqref{eq:phasestat1} holds for any $a\in L,\lambda\in S$. Similarly by compactness, we can cover $K$ by a finite number of open subset of the form $U_0\times V$. Using a smooth partition of unity and summing estimates in \eqref{eq:phasestat1}, we obtain a seminorm $\nu$ such that for any $a\in L$, $\lambda \in S$, $g\in C^\infty(K)$ and $t\geq 1$, \begin{equation}
        \label{eq:phasestat2} \left\vert \int_K e^{itf_{a,\lambda}(k)}g(k)dk \right\vert \leq \nu(g)t^{-\kappa(G)}.
    \end{equation}
    Now if $a\in L,\lambda\in S$ and $t\leq 1$, then by \eqref{eq:diffspherique} we get that \begin{equation}
        \label{eq:smallt} \Vert D^s\varphi_{t\lambda}(a)\Vert  \leq \underset{a\in L,\lambda\in S,k\in K\Vert X_i\Vert=1}{\sup} \sum \vert g_j(\lambda,a,X)(k)\vert \leq C_{L,s}
    \end{equation}where $C_{L,s}>0$ depends only on the compact $L$ and the order of differentiation.

    On the other hand for $t\geq 1$, combining \eqref{eq:diffspherique} and \eqref{eq:phasestat2}, we obtain \begin{equation}
        \label{eq:larget} \begin{aligned}
        \Vert D^s\varphi_{t\lambda}(a)\Vert  & = \underset{\Vert X_i\Vert=1}{\sup} \vert D^s\varphi_{t\lambda}(a)(X)\vert  \\
         & \leq  \underset{\Vert X_i\Vert=1}{\sup}\sum_{j=0}^s t^j \left\vert\int_K e^{itf_{a,\lambda}(k)} g_j(\lambda,a,X)(k)dk\right\vert \\
         & \leq \sum_{j=0}^s t^j\underset{\begin{subarray}{c}
  \Vert X_i\Vert=1\\
  a\in L,\lambda\in S
  \end{subarray}}{\sup}\nu(g_j(\xi,\eta,Y,X))t^{-\kappa(G)}\\
  & \leq  D_{L,s}t^{s-\kappa(G)} 
    \end{aligned}
    \end{equation}
where $D_{L,s}>0$ depends again only on the compact $L$ and the order of differentiation.

Thus combining \eqref{eq:smallt} and \eqref{eq:larget}, for any $\lambda\in \mathfrak{a}^*$, $a\in L$, $s\leq r$, \begin{equation}\label{eq:finalineq_integer}\Vert D^s\varphi_{\lambda}(a)\Vert \leq \max(C_{L,s},D_{L,s})=M_{L,s}.\end{equation}

Thus the differentials of the family of spherical functions are bounded independently on $\lambda$ up to order $r$. If $r=\kappa(G)$, the proof is complete.\smallskip

Otherwise, $\kappa(G)-r=\frac{1}{2}$. Then using \eqref{eq:larget} for $s=r$ and $s=r+1$, we show that for any $x,y\in L$, $\xi\in S$,$\eta\in C$, $t\geq 1$, we have on the one hand \begin{equation}\label{eq:cartanholderstep1}\Vert D^r \varphi_{t\lambda}(x)-D^r\varphi_{t\lambda}(y)\Vert \leq \Vert D^r \varphi_{t\lambda}(x)\Vert+\Vert D^r\varphi_{t\lambda}(y)\Vert\leq 2D_{L,r}t^{-1/2}\end{equation}and on the other hand, we get by the mean value theorem that \begin{equation}\label{eq:cartanholderstep2}
    \Vert D^r \varphi_{t\lambda}(x)-D^r\varphi_{t\lambda}(y)\Vert \leq \left(\underset{a\in L}{\sup}\Vert {D^{r+1}}\varphi_{t\lambda}(a)\Vert\right)\Vert x-y\Vert \leq D_{L,r+1}t^{1/2}\Vert x-y\Vert.\end{equation}
Thus, combining \eqref{eq:cartanholderstep1} and \eqref{eq:cartanholderstep2} yields \begin{equation}\label{eq:cartanholderstep3}\Vert D^r \varphi_{t\lambda}(x)-D^r\varphi_{t\lambda}(y)\Vert \leq \left(2D_{L,r}D_{L,r+1}\right)^{1/2}\Vert x-y\Vert^{1/2}.\end{equation}
Hence, setting $M_L=\max\left(\left(2D_{L,r}D_{L,r+1}\right)^{1/2},C_{L,r+1}(\diam L)^{1/2}\right)$, we have that for any $\lambda\in \mathfrak{a}^*$, $x,y\in L$, \begin{equation}\label{eq:holderstepfin}\Vert D^r \varphi_{\lambda}(x)-D^r\varphi_\lambda(y)\Vert \leq M_L \Vert x-y\Vert^{\kappa(G)-r}.\qedhere\end{equation}
\end{proof}

We will now show that this result is optimal. We will use the following lemma (\cite[Lemma 4.5]{dumas2024regularity}) which essentially states that families of exponential maps are not bounded in Hölder spaces.
\begin{lem}\label{lem:expo}Let $E$ be a finite dimensional real vector space, $U$ an open subset of $E$ such that $0\in \overline{U}$. Let $u_1,\cdots,u_n\in E^*$ distinct and non-zero, and $f_1,\cdots,f_n:E\to \C$ continuous functions such that for any $U'$ open subset of $U$, there is $x\in U'$ such that $\sum \vert f_j(x)\vert\neq 0$. Then there exists $C>0$, $d>0$, $x\in U$, and an open set $V$ with $0\in \overline{V}$ such that for all $y=x+h,h\in V$, $m\in \N$ and $N\geq \frac{d}{\Vert h\Vert}$, $$\frac{1}{N}\sum_{t=m}^{m+N-1} \left\vert \sum_{j=1}^n  f_j(x)e^{itu_j(x)}-f_j(y)e^{itu_{j}(y)} \right\vert^2 \geq C.$$
\end{lem}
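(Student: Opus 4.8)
The plan is to expand the squared modulus and read the whole expression as a time–average of a trigonometric polynomial, governed by the near–orthogonality of characters. Writing $p_j=u_j(x)$, $q_j=u_j(h)$ so that $u_j(y)=p_j+q_j$, set
\[
S_h(t)=\sum_{j=1}^n\big(f_j(x)e^{itp_j}-f_j(x+h)e^{it(p_j+q_j)}\big),
\]
a sum of at most $2n$ characters $t\mapsto e^{it\omega_k}$ with coefficients $w_k$. With $D_N(\omega)=\frac1N\sum_{t=m}^{m+N-1}e^{it\omega}$ one has $\frac1N\sum_{t=m}^{m+N-1}|S_h(t)|^2=\sum_{k,k'}w_k\overline{w_{k'}}D_N(\omega_k-\omega_{k'})$. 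The diagonal $k=k'$ contributes $\sum_k|w_k|^2=\sum_j|f_j(x)|^2+\sum_j|f_j(x+h)|^2$, and for off–diagonal terms I use the elementary bound $|D_N(\omega)|\le \frac{1}{N|\sin(\omega/2)|}$ for $\omega\notin 2\pi\Z$, which is uniform in $m$. Everything then reduces to keeping the positive diagonal energy while making the off–diagonal terms small.

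For the choice of $x$: the nonvanishing hypothesis (with $U'=U$) gives a point where $\sum_j|f_j|>0$, hence by continuity $\sum_j|f_j|>0$ on a ball $B\subseteq U$. The resonance set $R=\bigcup_{j\ne l}\{z:(u_j-u_l)(z)\in 2\pi\Z\}$ is a countable union of hyperplanes and so has empty interior, so I pick $x\in B\setminus R$; then $2A:=\sum_j|f_j(x)|^2>0$ and $\rho:=\min_{j\ne l}\operatorname{dist}((u_j-u_l)(x),2\pi\Z)>0$, using that the $u_j$ are distinct. For $V$: since each $u_j\ne 0$, $\bigcup_j\ker u_j\ne E$, so fixing $v_0$ outside it and using continuity I obtain an open cone $V$, intersected with a small ball of radius $\eta$ (so $0\in\overline V$, $\overline V$ compact), on which $|u_j(h)|\ge c\|h\|$ for all $j$, for some $c>0$.

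For $x\notin R$ and $\eta$ small, the $2n$ frequencies $p_j,\ p_j+q_j$ are pairwise distinct mod $2\pi$, so the diagonal equals $\sum_j|f_j(x)|^2+\sum_j|f_j(x+h)|^2\ge 2A$. The off–diagonal terms split in two. The \emph{near–resonant} pairs match $p_j$ with $p_j+q_j$, of gap $q_j=u_j(h)$; here $c\|h\|\le|u_j(h)|\le\pi$ gives $|\sin(q_j/2)|\ge c\|h\|/\pi$, so $|D_N|\le \frac{\pi}{Nc\|h\|}\le\frac{\pi}{cd}$ by $N\ge d/\|h\|$. Every remaining pair has gap $(u_j-u_l)(x)+O(\|h\|)$ with $j\ne l$, staying at distance $\ge\rho/2$ from $2\pi\Z$ once $\eta$ is small, whence $|D_N|\le\frac{1}{N\sin(\rho/4)}\le\frac{\eta}{d\sin(\rho/4)}$. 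Setting $B:=\sup_{j,\,h\in\overline V}|f_j(x+h)|<\infty$, the total off–diagonal contribution is at most $\frac{2\pi nB^2}{cd}+\frac{4n^2B^2\eta}{d\sin(\rho/4)}$.

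To conclude, fix $d$ large enough that the first error term is $\le A/2$, then $\eta$ small enough that the second is $\le A/2$ (shrinking $V$, keeping $0\in\overline V$); the off–diagonal is then $\le A$ in absolute value. Since the average is real, it is $\ge 2A-A=A$, proving the lemma with $C=A$. I expect the only delicate point to be the near–resonant pairs $p_j\leftrightarrow p_j+q_j$: their frequency gap $u_j(h)$ tends to $0$ with $h$, which is exactly why the hypothesis must be coupled both with the lower cutoff $N\ge d/\|h\|$ and with the directional lower bound $|u_j(h)|\ge c\|h\|$ enforced on $V$; all the other pairs are handled by the standard orthogonality of characters.
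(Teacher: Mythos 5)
Your proof is correct: choosing the base point $x$ in a ball where $\sum_j\vert f_j\vert>0$ but off the countable union of hyperplanes $\{(u_j-u_l)(z)\in 2\pi\Z\}$, taking $V$ a truncated cone on which $\vert u_j(h)\vert\geq c\Vert h\Vert$, and splitting the off-diagonal terms of the expanded square into near-resonant pairs (gap $u_j(h)$, controlled uniformly by the Dirichlet-kernel bound $\vert D_N(\omega)\vert\leq 1/(N\vert\sin(\omega/2)\vert)$ together with the hypothesis $N\geq d/\Vert h\Vert$) and non-resonant pairs (gap within $\rho/2$ of $(u_j-u_l)(x)$ modulo $2\pi\Z$, controlled by shrinking $\eta$) yields the diagonal energy $2A$ minus an error at most $A$, with all choices made in a consistent order ($d$ before $\eta$, since shrinking $V$ does not increase $B$ or change $c$). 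Note that this paper does not actually prove the lemma but imports it from \cite[Lemma 4.5]{dumas2024regularity}; your argument is the quantitative second-moment equidistribution mechanism that the statement's quantifier $N\geq d/\Vert h\Vert$ is calibrated for, and it is essentially the same approach as in that reference, so your self-contained write-up stands as a valid proof.
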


In order to prove optimality, we will show that a particular subfamily of the whole family of positive-definite spherical functions is already unbounded in Hölder spaces of higher order.
\begin{theorem}\label{thm:cartanopti}Let $G$ be a connected semisimple Lie group with finite center and $H=\mathfrak{p}\rtimes K$ its Cartan motion group. Let $r=\lfloor \kappa(G)\rfloor$ and $\delta=\kappa(G)-r$. For any $\delta'>\delta$, the family of positive definite spherical functions of $(H,K)$ is not bounded in $C^{(r,\delta')}(H_r)$.
\end{theorem}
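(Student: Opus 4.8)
The plan is to prove optimality by exhibiting a single direction $\lambda_0$ achieving the infimum in the definition of $\kappa(G)$, and showing that the corresponding one-parameter subfamily $\{\varphi_{t\lambda_0}\}_{t\geq 1}$ already fails to be bounded in $C^{(r,\delta')}(H_r)$ for any $\delta'>\delta$. First I would choose $\lambda_0\in \mathfrak{a}^*$, $a_0\in \mathfrak{a}^+$ realizing $n(\lambda_0)=2\kappa(G)$; the point is that $\kappa(G)$ is the exact order of decay of $\|D^r\varphi_{t\lambda}(a)\|$ supplied by the stationary phase estimate, so near such a $\lambda_0$ the upper bound $t^{r-\kappa(G)}=t^{-\delta}$ in \eqref{eq:larget} is sharp and cannot be improved. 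The heuristic is that if $\{\varphi_{t\lambda_0}\}$ were bounded in $C^{(r,\delta')}$ with $\delta'>\delta$, then by a rescaling argument the $r$-th differentials would have to decay strictly faster than $t^{-\delta}$, contradicting the genuine size of the oscillatory integral.

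The technical engine is Lemma \ref{lem:expo}. The key step is to produce, from the stationary phase analysis, an \emph{asymptotic expansion} of $D^r\varphi_{t\lambda_0}(a)$ as $t\to\infty$ whose leading term is an explicit sum of oscillating exponentials $\sum_j f_j(a)e^{it u_j(a)}$ of order exactly $t^{-\kappa(G)}=t^{-r-\delta}$. Concretely, near a critical point $k_wm$ the phase $f_{a,\lambda_0}$ has a nondegenerate transverse Hessian with eigenvalues $-\alpha(H_{\lambda_0})(w\alpha)(a)$, and the stationary phase formula gives a leading contribution proportional to $e^{itf_{a,\lambda_0}(k_w)}$ times a $t^{-n(\lambda_0)/2}$ amplitude built from the Hessian determinant; summing over $w\in W$ produces finitely many distinct linear phases $u_w(a)=f_{a,\lambda_0}(k_w)=\langle a,\Ad(k_w)H_{\lambda_0}\rangle=\langle a, H_{w\lambda_0}\rangle$. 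After differentiating $r$ times in $a$, the dominant term is $t^r$ times this leading asymptotic, i.e.\ of size $t^{r-\kappa(G)}$, with phases that are genuinely distinct (here I would use that $w\mapsto w\lambda_0$ takes distinct values for the relevant Weyl elements, or restrict to a subset where they do) and amplitudes $f_j$ not identically vanishing on any open set.

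I would then apply Lemma \ref{lem:expo} to the scaled functions $h_t(a)=t^{\kappa(G)-r}D^r\varphi_{t\lambda_0}(a)$: the lemma yields a point $x$, a constant $C>0$, a direction set $V$ with $0\in\overline V$, and a scale $d$ such that averaging $|h_t(x)-h_t(x+h)|^2$ over a window of $N\geq d/\|h\|$ consecutive integer values of $t$ stays bounded below by $C$. This forces, for each small $h\in V$, some integer $t\approx d/\|h\|$ with $\|D^r\varphi_{t\lambda_0}(x)-D^r\varphi_{t\lambda_0}(x+h)\|\gtrsim t^{r-\kappa(G)}\approx \|h\|^{\kappa(G)-r}=\|h\|^{\delta}$. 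Since $\delta'>\delta$, the Hölder quotient $\|D^r\varphi_{t\lambda_0}(x)-D^r\varphi_{t\lambda_0}(x+h)\|/\|h\|^{\delta'}\gtrsim \|h\|^{\delta-\delta'}\to\infty$ as $\|h\|\to 0$, so no uniform $C^{(r,\delta')}$ bound can hold. Pulling back through the smooth submersion $P$ of Proposition \ref{prop:KAKversionlisse} (whose precomposition is continuous on Hölder spaces by Lemma \ref{lem:precomposition}), this unboundedness on $\mathfrak{a}^+$ transfers to $H_r$, completing the proof; moreover these $\varphi_{t\lambda_0}$ are $K$-bi-invariant, giving the stronger claim in Theorem \ref{mainthmA}.

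The main obstacle I anticipate is making the leading-order stationary phase expansion rigorous with explicit, non-vanishing amplitudes and provably distinct phases, including control of the error term uniformly in $t$ so that the exact-order lower bound (not merely the upper bound of \eqref{eq:larget}) is justified; degenerate coincidences among the phases $\langle a,H_{w\lambda_0}\rangle$ for different $w$, or cancellation among the Weyl-orbit contributions at special $a$, would have to be ruled out by choosing $a=x$ generically, which is exactly the flexibility Lemma \ref{lem:expo} is designed to exploit.
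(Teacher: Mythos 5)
Your proposal is correct and follows essentially the same route as the paper: fix $\lambda$ with $n(\lambda)=2\kappa(G)$, extract the leading stationary-phase asymptotics of $D^r\varphi_{t\lambda}$ as a sum of exponentials with phases $(w\lambda)(a)$ over $W/W_\lambda$ (which handles the phase-distinctness issue you flag), and apply Lemma \ref{lem:expo} to contradict a $C^{(r,\delta')}$ bound. The only detail you leave implicit — the careful choice of the window start $m\approx\Vert h\Vert^{-\delta'}$ to absorb the $O(t^{-1})$ stationary-phase error — is exactly how the paper closes the argument.
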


\begin{proof}
Fix $\lambda\in \mathfrak{a}^*$ such that $n(\lambda)=2\kappa(G)$, and such that $\langle\alpha,\lambda\rangle \geq 0$ for any $\alpha\in \Sigma^+$. It suffices to show that the family $\left(\varphi_{t\lambda}\right)_{t\in \R}$ is not bounded in $C^{(r,\delta'}(\mathfrak{a}^+)$. As in the proof of Theorem \ref{thm:cartanreg}, let $f_{a,\lambda}:k\mapsto \langle a,\Ad(k)H_\lambda\rangle$ be the phase function. Since $f_{a,\lambda}$ is right $K$-invariant, we may see the integral on the quotient manifold $K/K_\lambda$. More precisely, let $d(k_\lambda)$ be the image of the Haar measure on $K$ under the quotient map $K\to K/K_\lambda$, then for all $t\geq 1$, $$\varphi_{t\lambda}(a)=\int_{K/K_\lambda} e^{itf_{a,\lambda}(k)}d(kK_\lambda).$$This will simplify the proof, as the critical set of $f_{a,\lambda}$ is thus $\mathcal{C}=\bigcup_{w\in W}k_wK_\lambda$ a finite set in $K/K_\lambda$. As in \eqref{eq:diffspherique}, for any $a\in \mathfrak{a}$, $t\geq 1$, $X\in \mathfrak{a}^r$, \begin{equation}
    \label{eq:cartanderivphicasopti} D^r\varphi_{t\lambda}(a)(X)=\sum_{j=0}^r t^j \int_{K/K_\lambda} e^{itf_{a,\lambda(k)}} g_j(a,X)(k)d(kK_\lambda).
\end{equation}
Let $I_j(a,X,t)=\int_K e^{itf_{a,\lambda}(k)} g_j(a,X)(k)d(kK_\lambda)$. Let $W_\lambda$ denote the stabilizer of $\lambda$ under the action of the Weyl group $W$. Let also $$\Sigma^+(\lambda)=\{\alpha \in \Sigma^+ \vert \langle \alpha,\lambda\rangle\neq 0\}$$ and $$\sigma_w=-\underset{\alpha\in \Sigma^+(\lambda)\neq 0}{\sum} m(\alpha)\operatorname{sgn}(\langle\alpha,\lambda\rangle)(w\alpha)(Y)).$$Let $d_0k$ denote the Riemannian measure on $K$ induced by the bi-invariant metric defined by the Killing form on $\mathfrak{k}$.  Let $d_0(kK_\mu)$ be the volume measure on $K/K_\mu$ associated to the (invariant) Riemannian metric induced by the restriction of the inner product $-\langle\cdot,\cdot\rangle$ on $\mathfrak{k}$. Denote $\operatorname{Vol}(K/K_\mu)=\int_{K/K_\mu} d_0(K/K_\mu)$. By uniqueness of the invariant measure on $K/K_\mu$, we have $d(kK_\mu)=\frac{1}{\operatorname{Vol}(K/K_\mu)}d_0(kK_\mu)$. For $w\in W$, $g\in C^\infty(K/K_\lambda)$, $a\in \mathfrak{a}^+$, set \begin{equation}
    \label{eq:cartandistrib} c_{w,a}(g)=e^{i\frac{\pi}{4}\sigma_w} \prod_{\alpha\in \Sigma^{+}(\lambda)}\left\vert\frac{\langle \alpha,\lambda \rangle}{4\pi}\left(w\alpha)(a)\right)\right\vert^{-\frac{m(\alpha)}{2}} \frac{1}{\operatorname{Vol}(K/K_\lambda)}g(k_wK_\lambda).
\end{equation}
Then, by the stationary phase approximation (\cite[Thm. 7.7.6]{hormander1983analysis},\cite[Thm. 5.5]{dumas2024regularity}), for any $a\in \mathfrak{a}^+$, there is a neighbourhood $U_a$ of $a$ in $\mathfrak{a}^+$ and $D(a)>0$ such that for any $0\leq j\leq r$, $t\geq 1$, $a'\in U_a$ and $X$ with $\Vert X_i\Vert=1$ for all $i$, \begin{equation}
    \label{eq:cartanphasestatopti}\left\vert I_j(a',X,t) - \sum_{W/W_\lambda} e^{it(w\lambda)(a')}t^{-\kappa(G)}c_{w,a'}(g_j(a',X))\right\vert \leq D(a)t^{-{\kappa(G)}-1}.
\end{equation}We use that $g_j$ is smooth in all variables hence bounded on compact sets and that the bound is uniform in the parameter $a'$ of the phase function.\smallskip

The end of the proof relies on Lemma \ref{lem:expo} and is similar to \cite[Thm. 4.4]{dumas2024regularity}.\smallskip

Combining \eqref{eq:cartanderivphicasopti} with \eqref{eq:cartanphasestatopti} for $0\leq j< r$, for any $a$ there is a neighbourhood $V_a$ of $a$ and a constant $C(a)$ such that for any $t\geq 1$, $a'\in V_a$ and $X$ with $\Vert X_i\Vert=1$, \begin{equation}\label{eq:cartanstep1}
    \left\vert D^r\varphi_{t\lambda}(a')(X)-t^r I_r(a',X,t) \right\vert \leq C(a)t^{-1}.
\end{equation}

For $X$ fixed with $\Vert X_i\Vert=1$, let $S_t(x)=\sum_{W/W_\lambda} e^{it(w\lambda)(x)}c_{w,x}(g_r(x,X))$. Combining \eqref{eq:cartanstep1} and \eqref{eq:cartanphasestatopti}, if $t\geq 1$, and $x,y\in U_a\cap V_a$, \begin{equation}\label{eq:cartanstep2}\begin{aligned}
    t^{-\delta}\vert S_t(x)-S_t(y)\vert & \leq  t^r\vert t^{-\kappa(G)}S_t(x)-I_r(x,X,t)\vert + t^r\vert I_r(x,X,t)-I_r(y,X,t)\vert \\ 
    &\phantom{\leq}+ t^r\vert I_r(y,X,t)-t^{-\kappa(G)}S_t(y)\vert  \\
     & \leq  2D(a)t^{-\delta-1}+\vert t^rI_r(x,H,t)-D^r\varphi_{t\lambda}(x)(X) \vert \\
     &\phantom{\leq} + \vert D^r\varphi_{t\lambda}(x)(X)-D^r\varphi_{t\lambda}(y)(X) \vert\\
     &\phantom{\leq}+\vert D^r\varphi_{t\lambda}(y)(X)-t^rI_r(y,X,t) \vert\\
     &\leq  2D(a)t^{-\delta-1}+2C(a)t^{-1} + \vert D^r\varphi_{t\lambda}(x)(X)-D^r\varphi_{t\lambda}(y)(X) \vert\\
     & \leq  \Vert D^r\varphi_{t\lambda}(x)-D^r\varphi_{t\lambda}(y) \Vert + 2(C(a)+D(a))t^{-1}.
\end{aligned}\end{equation}Now the functions $c_{w,x}(g_r(x,X))$ are all zero at $x$ if and only if $g_r(x,X)(k_wK_\lambda)=0$ for all $w\in W$. But $$g_r(x,X)(kK_\lambda)=\prod_{i=1}^r \langle X_i,\Ad(k)H_\lambda\rangle.$$Thus given any open subset of $\mathfrak{a}^+$, we can choose $x,X$ such that $g_r(x,X)(e)\neq 0$. Thus the hypotheses of Lemma \ref{lem:expo} hold for the family of functions $f_w:x\mapsto c_{w,x}(g_r(x,X))$, for $U=\mathfrak{a}^+$. Let $C,d,x,V$ be given by Lemma \ref{lem:expo}, $W_x=x+V$ such that for any $y\in W_x$, $m\in \N$, $N\geq \frac{d}{\Vert x-y\Vert}$, \begin{equation}
    \label{eq:cartanstep3} \sum_{t=m}^{m+N-1} \vert S_t(x)-S_t(y)\vert^2 \geq CN.
\end{equation} From now on, we choose $a=x$ given above. Let $M=4(C(x)+D(x))^2$, we get from \eqref{eq:cartanstep2} that for any $t\geq 1$, $y\in U_x\cap V_x\cap W_x$, \begin{equation}
    \label{eq:cartanstep4}\frac{t^{-2\delta}}{2}\vert S_t(x)-S_t(y)\vert^2 \leq \Vert D^r\varphi_{t\lambda}(x)-D^r\varphi_{t\lambda}(y) \Vert^2+ Mt^{-2}
\end{equation}

Assume now that the family of positive definite spherical functions of $(H,K)$ is bounded in $C^{(r,\delta')}(\mathfrak{a}^+)$ for $\delta'> \delta$. In particular, up to reducing $U_x\cap V_x\cap W_x$ to a bounded subset of diameter $L$ if necessary, there is $D>0$ such that for any $y\in U_x\cap V_x\cap W_x$ and $t\geq 1$, \begin{equation}
    \label{eq:cartanhypo} \Vert D^r\varphi_{t\lambda}(x)-D^r\varphi_{t\lambda}(y) \Vert \leq D\Vert x-y\Vert^{\delta'}
\end{equation}

For $y$ fixed, set $m,N$ such that \begin{equation}\label{eq:choicem}\frac{1}{\Vert x-y\Vert^{\delta'}}\leq m\leq \frac{1}{\Vert x-y\Vert^{\delta'}}+1\end{equation}and \begin{equation}\label{eq:choiceN}\frac{d}{\Vert x-y\Vert}\leq N\leq \frac{d}{\Vert x-y\Vert}+1.\end{equation}Combining \eqref{eq:cartanstep3}, \eqref{eq:cartanstep4} and \eqref{eq:cartanhypo} gives \begin{equation}
    \label{eq:cartanstep5} \begin{aligned}\frac{CN}{2(m+N)^{2\delta}}&\leq\sum_{t=m}^{m+N-1} \frac{t^{-2\delta}}{2} \vert S_t(x)-S_t(y)\vert^2 \\&\leq \sum_{t=m}^{m+N-1} \left(\Vert D^r\varphi_{t\lambda}(x)-D^r\varphi_{t\lambda}(y) \Vert^2+ Mt^{-2}\right)\\ &\leq ND^2\Vert x-y\Vert ^{2\delta'}+\frac{MN}{m^2}\end{aligned}
\end{equation}
thus \begin{equation}
    \label{eq:cartanstep6} \frac{C}{2(m+N)^{2\delta}} \leq D^2\Vert x-y\Vert ^{2\delta'}+\frac{M}{m^2} \leq (D^2+M)\Vert x-y\Vert^{2\delta'}
\end{equation}by \eqref{eq:choicem}. But by \eqref{eq:choicem} and \eqref{eq:choiceN}, we have \begin{multline}
    m+N\leq \frac{d}{\Vert x-y\Vert}+1+\frac{1}{\Vert x-y\Vert^{\delta'}}+1 \leq \frac{1}{\Vert x-y\Vert}\left(d+2\Vert x-y\Vert +\Vert x-y\Vert^{1-\delta'}\right)\\ \leq \frac{1}{\Vert x-y\Vert}\left(d+2L+L^{1-\delta'}\right)
\end{multline}hence \eqref{eq:cartanstep6} becomes \begin{equation}
    \label{eq:step7} \frac{C}{2(d+2L+L^{1-\delta'})^{2\delta}}\Vert x-y\Vert^{2\delta}\leq (D^2+M)\Vert x-y\Vert^{2\delta'}.
\end{equation}
Since \eqref{eq:step7} holds for any $y\in U_x\cap V_x\cap W_x$ with the constant involved independent from $y$ and $\delta'>\delta$, we get a contradiction as $y$ goes to $x$ (which is possible because $0\in \overline{V}$ hence $x\in \overline{U_x\cap V_x\cap W_x}$).    
\end{proof}

Finally, we can prove the main theorem as a consequence of these results.
\begin{theorem}
    Let $G$ be a connected semisimple Lie group with finite center and $H=\mathfrak{p}\rtimes K$ its Cartan motion group. Let $r=\lfloor \kappa(G)\rfloor$ and $\delta=\kappa(G)-r$. Then any $K$-finite matrix coefficient of a unitary representation of $H$ is in $C^{(r,\delta)}(H_r)$. Furthermore, this result is optimal in the sense that  for any $\delta'>\delta$, there exists a $K$-finite (even $K$-bi-invariant) coefficient which is not in $C^{(r,\delta')}(H_r)$.
\end{theorem}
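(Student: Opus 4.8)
The plan is to deduce the theorem as an assembly of the three main technical results already proved, together with the reduction machinery of Section~\ref{sec:kfinite}. I would split the argument into a positive part (every $K$-finite coefficient lies in $C^{(r,\delta)}(H_r)$) and an optimality part (sharpness of $\delta$), and in each part run the same chain of equivalences in opposite directions.

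For the positive part, I would start from Theorem~\ref{thm:cartanreg}, which asserts that the family of positive-definite spherical functions of $(H,K)$ is bounded in $C^{(r,\delta)}(H_r)$. Since $H=\mathfrak{p}\rtimes K$ is a second countable Lie group and $(H,K)$ is a Gelfand pair, Lemma~\ref{lem:lienspheriquekbiinv} converts this boundedness into the statement that \emph{every} $K$-bi-invariant matrix coefficient of a unitary representation of $H$ belongs to $C^{(r,\delta)}(H_r)$. To upgrade from $K$-bi-invariant to $K$-finite, I would invoke Theorem~\ref{thm:kfinite}, whose hypothesis is that $(H_r,K)$ admits a well-behaved $KAK$ decomposition in the sense of Definition~\ref{def:smoothkak}; this hypothesis is exactly what Propositions~\ref{prop:kakbase} and~\ref{prop:KAKversionlisse} provide. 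The conclusion is that every $K$-finite coefficient is in $C^{(r,\delta)}(H_r)$.

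For the optimality part, I would run the converse direction of the very same reductions. Theorem~\ref{thm:cartanopti} states that for any $\delta'>\delta$ the family of positive-definite spherical functions is \emph{not} bounded in $C^{(r,\delta')}(H_r)$. By the equivalence in Lemma~\ref{lem:lienspheriquekbiinv}, this failure of boundedness means precisely that it is not the case that every $K$-bi-invariant coefficient lies in $C^{(r,\delta')}(H_r)$; hence there exists a $K$-bi-invariant, and therefore $K$-finite, matrix coefficient that is not in $C^{(r,\delta')}(H_r)$. Combining the two parts yields the optimal regularity $(r,\delta)$ with $r=\lfloor\kappa(G)\rfloor$ and $\delta=\kappa(G)-r$.

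The proof is thus essentially bookkeeping, with all the substantive analytic content residing in the cited ingredients (the stationary-phase estimates of Theorems~\ref{thm:cartanreg} and~\ref{thm:cartanopti}). The step requiring the most care in the write-up is checking that the hypotheses of the reduction results are genuinely satisfied: that $(H,K)$ is a Gelfand pair so that Lemma~\ref{lem:lienspheriquekbiinv} applies, and, more delicately, that the $KAK$ decomposition established in Section~\ref{sec:kak} is well-behaved in the strong sense of Definition~\ref{def:smoothkak} (smoothness of the map $P$ and of suitable local sections $k_1,k_2$, constancy of the stabilizer on $A_r$). This is what legitimately allows Theorem~\ref{thm:kfinite} to transfer regularity from $K$-bi-invariant to $K$-finite coefficients on the open set $H_r$ of regular points rather than on all of $H$, where the decomposition degenerates.
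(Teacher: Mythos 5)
Your proposal is correct and follows exactly the same route as the paper: Propositions~\ref{prop:kakbase} and~\ref{prop:KAKversionlisse} establish the well-behaved $KAK$ decomposition needed for Theorem~\ref{thm:kfinite}, which reduces the statement to $K$-bi-invariant coefficients, and then Lemma~\ref{lem:lienspheriquekbiinv} combined with Theorems~\ref{thm:cartanreg} and~\ref{thm:cartanopti} gives both the regularity and its optimality. Your additional remarks on verifying the hypotheses are sound but do not change the substance of the argument.
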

\begin{proof}
    By Propositions \ref{prop:kakbase} and \ref{prop:KAKversionlisse}, $(H_r,K)$ has a well-behaved $KAK$ decomposition. Thus by Theorem \ref{thm:kfinite}, it suffices to consider $K$-bi-invariant matrix coefficients. The result is then an immediate consequence of Lemma \ref{lem:lienspheriquekbiinv} and Theorems \ref{thm:cartanreg} and \ref{thm:cartanopti}.
\end{proof}

\begin{remark}
    Given any compact Lie group and any orthogonal representation $\pi:K\to O(V)$ on a finite dimensional real vector space $V$, we can form the semi-direct product $H=V\rtimes K$. In general, there is no reason for $H$ to be the Cartan motion group of a semisimple Lie group. However, $(H,K)$ remains a Gelfand pair, so we may ask if we can find the optimal regularity of $K$-finite matrix coefficients of unitary representations of $H$. By Mackey's theory (\cite{mackeysemidirect}), we can still find the irreducible representations of $H$ with a nonzero $K$-invariant vector and thus compute the positive definite spherical functions. Indeed, $K$ acts on $\hat{V}$ the space of characters of $V$. Given some $\chi_\lambda:x\mapsto e^{i\langle \lambda,x\rangle}\in \hat{V}$, let $K_\lambda=\Stab(\chi_\lambda)=\Stab (\lambda)$ and $H_\lambda=V\rtimes K_\lambda$. Then for any $(\rho,V_\rho)\in \hat{K}_\lambda$ irreducible representation of $K_\lambda$, we may consider the induced representation $\mathrm{Ind}_{H_\lambda}^H (\chi_\lambda \otimes \rho)$ acting on $L^2(H/H_\lambda;V_\rho)$. Mackey's result is that those representations are irreducible, and that they exhaust the irreducible representations of $H$. It is not difficult to see that the induced representation has a $K$-invariant vector if and only if $\chi_\lambda\otimes \rho$ has a $K_\lambda$-invariant vector, thus by irreducibility of $\rho$, if and only if $\rho$ is the trivial representation. In that case, the one-dimensional space of invariant vectors is the space of constant functions. Thus, the spherical function associated to $\mathrm{Ind}_{H_\lambda}^H (\chi_\lambda \otimes 1)$ can be expressed by $$\varphi_\lambda(v)=\int_K e^{i\langle \lambda,\pi(k)v\rangle}dk$$with $\lambda\in V$, and $\varphi_\lambda=\varphi_{\lambda'}$ if $\lambda,\lambda'$ are in the same $K$-orbit. So the spherical functions are still oscillatory integrals. However, the lack of semisimple structure makes it difficult to understand the orbits and the critical points of the phase functions.

    The simplest case to consider is $K=SO(3)$ and $\pi$ irreducible. In the case of Cartan motion groups, the representation on $\mathfrak{p}$ is irreducible if and only if the ambient semisimple Lie group $G$ is simple. By the classification of simple Lie groups, we see that the only irreducible representations of $SO(3)$ which appear in Cartan motion groups of semisimple Lie groups are the $3$-dimensional and the $5$-dimensional representations. Thus, the first case outside the world of Cartan motion groups to consider seems to be the $7$-dimensional representation of $SO(3)$ on the space $V_3$ of harmonic homogeneous polynomials of degree $3$ in $3$ variables with real coefficients. In this specific case, we may be able to make the necessary computations without the additional semisimple structure.
\end{remark}

\nocite{*}

\bibliographystyle{alpha}
\bibliography{Ref}

\begin{thebibliography}{dLMdlS16}

\bibitem[Bou07]{bourbaki2007groupes}
N.~Bourbaki.
\newblock {\em Groupes et alg{\`e}bres de Lie: Chapitres 4, 5 et 6}.
\newblock Bourbaki, Nicolas. Springer Berlin Heidelberg, 2007.

\bibitem[DKV83]{DKV}
J.~J. Duistermaat, J.~A.~C. Kolk, and V.~S. Varadarajan.
\newblock Functions, flows and oscillatory integrals on flag manifolds and conjugacy classes in real semisimple {Lie} groups.
\newblock {\em Compositio Mathematica}, 49(3):309--398, 1983.

\bibitem[dLMdlS16]{dLMdlS}
T.~de~Laat, M.~Mimura, and M.~de~la Salle.
\newblock On strong property {(T)} and fixed point properties for {Lie} groups.
\newblock {\em Annales de l'Institut Fourier}, 66(5):1859--1893, 2016.

\bibitem[DM78]{dixmier1978factorisations}
J.~Dixmier and P.~Malliavin.
\newblock {\em Factorisations de fonctions et de vecteurs ind{\'e}finiment diff{\'e}rentiables}.
\newblock Publications math{\'e}matiques de l'Universit{\'e} Pierre et Marie Curie. Universit{\'e} Pierre et Marie Curie, 1978.

\bibitem[Dum24a]{dumas2024regularity}
G.~Dumas.
\newblock Regularity of k-finite matrix coefficients of semisimple lie groups.
\newblock {\em arXiv preprint arXiv:2409.07944}, 2024.

\bibitem[Dum24b]{dumas2023regularity}
G.~Dumas.
\newblock Regularity of matrix coefficients of a compact symmetric pair of {L}ie groups.
\newblock {\em Trans. Amer. Math. Soc.}, 377(10):7421--7474, 2024.

\bibitem[Gin67]{GindikinS.G.1967Urog}
S.~G. Gindikin.
\newblock Unitary representations of groups of automorphisms of {R}iemann symmetric spaces of null curvature.
\newblock {\em Functional analysis and its applications}, 1(1):28--32, 1967.

\bibitem[Hal03]{hall2003lie}
B.C. Hall.
\newblock {\em Lie {G}roups, {L}ie {A}lgebras, and {R}epresentations: {A}n {E}lementary {I}ntroduction}.
\newblock Grad. Texts in Math. Springer, 2003.

\bibitem[H{\"o}r83]{hormander1983analysis}
L.~H{\"o}rmander.
\newblock {\em The Analysis of Linear Partial Differential Operators I: Distribution theory and Fourier analysis}.
\newblock A series of comprehensive studies in mathematics. Springer-Verlag, 1983.

\bibitem[Kna01]{knapp2001representation}
A.W. Knapp.
\newblock {\em Representation Theory of Semisimple Groups: An Overview Based on Examples}.
\newblock Princeton landmarks in mathematics and physics. Princeton University Press, 2001.

\bibitem[Kna02]{knapp2002lie}
A.W. Knapp.
\newblock {\em Lie Groups Beyond an Introduction}.
\newblock Progress in Mathematics. Birkh{\"a}user Boston, 2002.

\bibitem[Laf08]{lafforgue}
V.~Lafforgue.
\newblock {Un renforcement de la propriété (T)}.
\newblock {\em Duke Math. J.}, 143(3):559 -- 602, 2008.

\bibitem[Lee03]{lee2003introduction}
J.M. Lee.
\newblock {\em Introduction to Smooth Manifolds}.
\newblock Graduate Texts in Mathematics. Springer, 2003.

\bibitem[Loo69a]{loos1969symmetric}
O.~Loos.
\newblock {\em Symmetric {S}paces Vol.1 : General {T}heory}.
\newblock Mathematics Lecture Note Series. W.A. Benjamin, 1969.

\bibitem[Loo69b]{loos1969symmetric2}
O.~Loos.
\newblock {\em Symmetric {S}paces Vol.2 : Compact spaces and classification}.
\newblock Mathematics Lecture Note Series. W. A. Benjamin, 1969.

\bibitem[Mac49]{mackeysemidirect}
G.W. Mackey.
\newblock Imprimitivity for representations of locally compact groups {I}.
\newblock {\em Proceedings of the National Academy of Sciences of the United States of America}, 35(9):537--545, 1949.

\bibitem[Var13]{varadarajan2013lie}
V.S. Varadarajan.
\newblock {\em Lie Groups, Lie Algebras, and Their Representations}.
\newblock Graduate Texts in Mathematics. Springer New York, 2013.

\bibitem[vD09]{Dijk+2009}
G.~van Dijk.
\newblock {\em Introduction to Harmonic Analysis and Generalized Gelfand Pairs}.
\newblock De Gruyter, 2009.

\bibitem[Vil68]{vilenkin1968special}
N.Ja. Vilenkin.
\newblock {\em Special Functions and the Theory of Group Representations}.
\newblock Translations of mathematical monographs. American Mathematical Society, 1968.

\end{thebibliography}
\end{document}